\newcommand{\ring}[1]{\ensuremath{\mathbb{#1}}}
\newcommand\ZZ{\ring{Z}}
\renewcommand\>{\rangle}
\newcommand\<{\langle}
\newtheorem{theorem}{Theorem}[section]
\newtheorem{corollary}[theorem]{Corollary}
\newtheorem{proposition}[theorem]{Proposition}
\newtheorem{lemma}[theorem]{Lemma}
\theoremstyle{definition}
\newtheorem{example}[theorem]{Example}
\newtheorem{definition}[theorem]{Definition}
\newtheorem{definitions}[theorem]{Definitions}
\newcommand{\dis}{\mathbf{d}}
\newcommand{\ld}{\mathrm{LD}}
\newcommand{\block}{\mathcal{B}}
\DeclareMathOperator\Betti{Betti} 
\begin{document}
\title{}

\title[On Length Densities]{On Length Densities}

\author{Scott T. Chapman}
\address{Department of Mathematics and Statistics, Sam Houston State University, 
Huntsville, TX  77341}
\email{scott.chapman@shsu.edu}
\urladdr{www.shsu.edu/$\sim$stc008/} 

\author{Christopher O'Neill}
\address{Mathematics and Statistics Department, San Diego State University, 
 San Diego, CA 92182}
\email{cdoneill@sdsu.edu}

\author{Vadim Ponomarenko}
\address{Mathematics and Statistics Department, San Diego State University, 
 San Diego, CA 92182}
\email{vadim123@gmail.com}

\begin{abstract}
For a commutative cancellative monoid $M$, we introduce the notion of the \textit{length density} of both a nonunit $x\in M$, denoted $\ld (x)$, and the entire monoid $M$, denoted $\ld (M)$.  This invariant is related to three widely studied invariants in the theory of non-unit factorizations, $L(x)$, $\ell(x)$, and $\rho(x)$.  We consider some general properties of $\ld (x)$ and $\ld (M)$ and give a wide variety of examples using numerical semigroups, Puiseux monoids, and Krull monoids.  While we give an example of a monoid $M$ with irrational length density, we show that if $M$ is finitely generated, then $\ld (M)$ is rational and there is a nonunit element $x\in M$ with $\ld (M)=\ld (x)$ (such a monoid is said to have accepted length density).  While it is well-known that the much studied asymptotic versions of $L(x)$, $\ell (x)$ and $\rho (x)$ (denoted $\overline{L}(x)$, $\overline{\ell}(x)$, and $\overline{\rho} (x)$) always exist, we show the somewhat surprising result that $\overline{\ld}(x) = \lim_{n\rightarrow \infty} \ld (x^n)$ may not exist.  We also give some finiteness conditions on $M$ that force the existence of $\overline{\ld}(x)$.
\end{abstract}


\subjclass{13F15, 20M14, 11R27}
\keywords{non-unique factorization, length density, elasticity of factorization, tame degree, catenary degree}

\maketitle

\medskip

\section{Introduction}
\label{sec:intro}

\markright{On Length Densities}

A commutative cancellative monoid $M$ with set of irreducible elements (or atoms) $\mathcal{A}(M)$ is called \textit{atomic} if for each nonunit $x\in M$ there are 
$x_1,\ldots, x_k\in\mathcal{A}(M)$ such that $x=x_1\cdots x_k$.  For such an $x$, set
\begin{equation}\label{lengthset}
\mathsf{L}(x)=\{k\in \mathbb{N}\; |\; \mbox{there exist atoms }x_1,\ldots, x_k \mbox{ such that }x=x_1\cdots x_k\}.
\end{equation}
The set $\mathsf{L}(x)$ is known as the \textit{set of lengths} of $x\in M$ (see \cite{Ger1}), and its study over the past 60 years has been the principal focus of non-unique factorization theory (the monograph \cite{GHKb} is a good general source on this topic).  Several of the fundamental results in this area have appeared in these \textsc{Proceedings}.   
For instance, if $M$ is the multiplicative monoid of an integral domain $R$ then set
\begin{equation}\label{contstants}
L(x)=\max\mathsf{L}(x), 
\quad
\ell(x)=\min\mathsf{L}(x), 
\quad
\rho(x) = \frac{L(x)}{\ell(x)},
\quad \mbox{and} \quad
\rho(M) = \sup\{\rho(x)\; |\; x\in M\}.
\hspace{-0.15in}
\end{equation}
The constant $\rho(x)$ is known as the \textit{elasticity} of $x$ in $M$ and the constant $\rho(M)$ as the \textit{elasticity of} $M$.
If $R$ is an algebraic number ring, then Carlitz  showed in \cite{Ca} that $R$ has class number less than or equal to two if and only if $\rho(M)=1$.
If we further set
\begin{equation}\label{bigLl}
\overline{L}(x) = \lim_{n\rightarrow \infty}\frac{L(x^n)}{n}
\quad \mbox{and} \quad
\overline{\ell}(x) = \lim_{n\rightarrow \infty}\frac{\ell(x^n)}{n},
\end{equation}
then Anderson and Pruis show in \cite{DA} that 
\begin{enumerate}
\item[(i)] both the limits $\overline{L}(x)$ and $\overline{\ell}(x)$ exist (although $\overline{L}(x)$
may be infinite);
\item[(ii)] if $\alpha$ and $\beta\in[0,\infty]$ with $0\leq \alpha\leq 1\leq \beta\leq\infty$, then there is an integral domain $R$ and an irreducible $x\in R$
with $\overline{\ell}(x)=\alpha$ and $\overline{L}(x)=\beta$.
\end{enumerate}
The elasticity is further pursued in \cite{AACS} where it is shown that if $M$ is the multiplicative monoid of a Krull domain with finite divisor class group,  then $\rho(M)$ is rational and moreover there exists a nonunit $x\in M$ so that $\rho(M)=\rho(x)$ (such a monoid is said to have \textit{accepted elasticity}).  

These are but a few of the numerous constants that have been attached to $M$ and $x$ to better describe their factorization properties.  The above constants are rather ``coarse'' in the sense that they merely describe the extreme values in $\mathsf{L}(x)$.  The purpose of this note is to introduce a new constant, finer in nature, which describes not just extremes, but the entire length set. 
\begin{definitions}\label{basic}
Let $M$ be a commutative cancellative atomic BF-monoid with set of units $M^\times$. 
Define a function $\mathsf{L}^\Delta : M \rightarrow \mathbb{N}_0$ via        
\[
\mathsf{L}^\Delta(x) = L(x) - \ell(x)
\]
where we define  $\mathsf{L}^\Delta (x) = 0$ if $x\in M^\times$.
We define the \textit{length ideal} of $M$, denoted $M^{LI}$ as the set of elements with nonzero image under $\mathsf{L}^\Delta$.  
For $x\in M^{LI}$ set
\[
\ld (x)= \frac{|\mathsf{L}(x)|-1}{\mathsf{L}^\Delta(x)},
\]
which we call the \textit{length density} of $x$.
Moreover, set 
\[
\ld(M) = \inf\{\ld (x)\,|\, x\in M^{LI}\},
\]
which we call the \textit{length density} of $M$.  If there is an $x\in M^{LI}$ such that 
$\ld(M) = \ld (x)$, then we say that the length density of $M$ is \textit{accepted}.  Set
\[
\overline{\ld}(x) = \lim_{n\rightarrow \infty}\ld (x^n)
\]
to be the \textit{asymptotic length density of} $x$, provided this limit exists.
\end{definitions} 

Notice that $M$ is half-factorial if and only if $M^{LI}$ is empty; we henceforth exclude such monoids from consideration.  Clearly $M^{LI}$ is an ideal, as $M^{LI}M\subseteq M^{LI}$.
Under the hypothesis of Definitions \ref{basic}, each $\ld(x)$ is a rational number in the interval $(0,1]$ and so $0\leq \ld(M)\leq 1$.
Before considering further the inequality $0\leq \ld(M)\leq 1$, we require some additional notation.  We will call a subset $S\subseteq \mathbb{N}$ an \textit{interval} if $S = [\min S, \max S] \cap \mathbb{N}$.  If $\mathsf{L}(x)=\{n_1, n_2, \ldots ,n_k\}$,
where $n_1<n_2<\cdots <n_k$, then the \emph{delta set} of $x$ and $M$ are defined as 
\[
\Delta(x)=\{n_{i+1}-n_i\; |\; 1\leq i<k\}
\quad \mbox{and} \quad
\Delta(M)=\bigcup_{x\in M} \Delta(x),
\]
respectively.  
There is a wealth of literature concerning the delta set of various types of commutative cancellative monoids:\ Krull monoids~\cite{BCRSY,CGP,GS2,GY1,GZ2}; numerical monoids~\cite{BCKR,GGMV,GSLM,GS1}; Puiseux monoids~\cite{CGG,FG}; and arithmetic congruence monoids~\cite{BCS,BCCM}.  We assume the reader has a working knowledge of the terminology and basic properties of these types of monoids.
 
Since our investigations of $\ld (x)$ and $\ld (M)$ will reduce to the study of $\mathsf{L}(x)$, we at times will rely on some established structure theorems for this set.  Let $L\subset \mathbb{Z}$ be finite, $d\in\mathbb{N}$, and $l,M\in\mathbb{N}_0$.  We call $L$ an \textit{almost arithmetic progression} (or \textit{AAP}) with difference $d$, length $l$, and bound $M$ if
\[L=y+(L^\prime \cup L^\ast \cup L^{\prime\prime})\subseteq y+d\mathbb{Z}\]
where $L^\ast=d\mathbb{Z} \cap [0,ld]$, $L^\prime \subseteq [-M,-1]$, $L^{\prime \prime}\subseteq ld+[1,M]$, and $y\in \mathbb{Z}$.  An analysis of monoids and elements whose lengths are almost arithmetic progressions can be found in \cite[Chapter~4]{GHKb}.

We break our remaining work into 3 sections.  In Section~\ref{sec:basicideas}, we review some basic properties of length density and in Proposition~\ref{basicbounds} offer bounds on $\ld (x)$ and $\ld (M)$.  We consider when the extreme values in these bounds are met and offer a wide array of examples of such behavior.  In Section~\ref{sec:accepted}, Proposition~\ref{niceconstruct} allows one to construct monoids with both arbitrary elasticity and length density.  This construction can be used to construct monoids with irrational length density.  We follow this in Theorem~\ref{t:fingenaccepted} by arguing that a finitely generated monoid has rational accepted length density.  We close Section~\ref{sec:accepted} with a brief discussion of the computation of length density for block monoids, and in turn for general Krull monoids.  Section~\ref{sec:asymptotic} begins with Example~\ref{noasym} which illustrates that the asymptotic length density of an element may not exist; this is in stark constrast to the previously mentioned results for $\overline{\ell}(x)$ and $\overline{L}(x)$ (as well as the asymptotic elasticity defined by $\overline{\rho}(x) = \overline{L}(x)/\overline{\ell}(x)$).  We then give conditions in Theorem \ref{asymptotic} on an atomic monoid which guarantee the existence of $\overline{\ld}(x)$, and note that finitely generated monoids and Krull monoids with finite divisor class group satisfy these conditions.  Our work is generally self contained; we direct the reader to~\cite{GHKb} for any undefined terminology or background.

 \section{Basic Ideas and Bounds on the Length Density}
 \label{sec:basicideas}
 
 We open by considering the largest value that $\ld(x)$ can attain.

\begin{proposition}\label{delta1}  Let $M$ be a commutative cancellative atomic monoid and $x\in M^{LI}$.  The following statements are equivalent.
\begin{enumerate}
\item $\ld(x)=1$. 
\item $\mathsf{L}(x)$ is an interval. 
\item $\Delta(x)=\{1\}$.
\end{enumerate}
If all the elements of $M^{LI}$ satisfy any of these conditions, then $M$ necessarily has accepted length density.  
\end{proposition}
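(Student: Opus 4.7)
The plan is to prove the equivalence in a cycle, using the general bound that $|\mathsf{L}(x)| \leq \mathsf{L}^\Delta(x)+1$ for any $x \in M^{LI}$, since the elements of $\mathsf{L}(x)$ are distinct integers lying in the interval $[\ell(x), L(x)] \cap \mathbb{Z}$.

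For (1)$\Rightarrow$(2), I would observe that $\ld(x)=1$ forces $|\mathsf{L}(x)| = \mathsf{L}^\Delta(x)+1 = L(x)-\ell(x)+1$, so $\mathsf{L}(x)$ must contain every integer in $[\ell(x),L(x)]$, hence is an interval. For (2)$\Rightarrow$(3), if $\mathsf{L}(x)=\{n_1 < n_2 < \cdots < n_k\}$ is an interval then each consecutive difference $n_{i+1}-n_i$ equals $1$, so $\Delta(x)=\{1\}$ (nonempty since $x\in M^{LI}$ gives $k\geq 2$). For (3)$\Rightarrow$(1), if all consecutive differences are $1$ then $|\mathsf{L}(x)|-1$ equals the sum of those differences, which is $L(x)-\ell(x)=\mathsf{L}^\Delta(x)$, giving $\ld(x)=1$.

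For the final assertion, note that since we have excluded half-factorial monoids, $M^{LI}$ is nonempty. If every $x \in M^{LI}$ has $\ld(x)=1$, then $\ld(M) = \inf\{1\} = 1$, and this infimum is attained by any element of $M^{LI}$, so the length density of $M$ is accepted.

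There is no real obstacle here; the argument is essentially a direct unpacking of the definitions together with the elementary counting fact that a finite set of integers is an interval precisely when its cardinality exceeds its diameter by one.
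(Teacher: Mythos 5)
Your proof is correct. The paper actually states Proposition~\ref{delta1} without any proof, treating it as immediate from the definitions; your cycle of implications is exactly the natural verification, and the counting/telescoping idea you use ((3)$\Rightarrow$(1) via summing consecutive gaps) is the same technique the authors do spell out in their proof of the more general Proposition~\ref{basicbounds}, of which this result is essentially the special case $\Delta(x)=\{1\}$. Your handling of the final assertion, including the observation that $M^{LI}\neq\emptyset$ because half-factorial monoids are excluded, is also correct.
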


\begin{example}
If $\ld(M)=1$, then all elements of its length ideal satisfy the conditions of Proposition \ref{delta1} and $\Delta(M)=\{1\}$.  In the general scheme of factorization theory, such monoids   have appeared in the literature, but have not been widely studied.  Hence, we offer several examples.
\begin{enumerate}
\item 
A numerical monoid is any cofinite additive submonoid of $\mathbb{N}_0$.  Let $M$ be a numerical monoid generated by an interval of integers (i.e., $M=\langle n, n+1, n+2, \ldots, n+k\rangle$ where $k\leq n-1$).  By \cite[Theorem 3.9]{BCKR}, $\Delta (M)=\{1\}$.  This relationship does not work conversely.  Additionally, let $r_1<r_2< ...< r_k$ be natural numbers with $\gcd(r_1,\ldots ,r_k)=1$ and set $n>r_k^2$.  Then, by \cite[Corollary 5.7]{CGHOPPW}, the delta set of the shifted numerical monoid $\langle n, n+r_1, n+r_2,..., n+r_k\rangle$ is $\{1\}$. 

\item 
Let $M$ be a Krull monoid with finite divisor class group $G$ such that each divisor class of $G$ contains a prime divisor.  By \cite[Corollary 2.3.5]{G1}, the only two cases where $\Delta(M)=\{1\}$ occur are when $G=\mathbb{Z}_3$ or $G=\mathbb{Z}_2\oplus \mathbb{Z}_2$.   Thus, for instance, the elements of an algebraic ring of integers with class number 3 will satisfy Proposition \ref{delta1}.  A simplier construction is also possible.  Let $M=\{(x_1,x_2,x_3)\,\mid\, \mbox{each }x_i\in\mathbb{N}_0\mbox{ and } x_1+2x_2=3x_3\}$.
By \cite[Theorem 1.3]{CKO}, $M$ is a Krull monoid under addition with class group $\mathbb{Z}_3$ which has $\Delta(M)=\{1\}$. 

\item 
If a and b are positive integers with $a \leq b$ and $a^2 \equiv
a \bmod{b}$, then the set 
\[M_{a,b} = \{1\} \cup \{x \in \mathbb{N}\,\mid\,  x \equiv a \bmod{b}\}\]
is a multiplicative monoid known as an \textit{arithmetical congruence
monoid} (or \textit{ACM}).  If $\gcd(a,b)=p^\alpha$ for $p$ a prime, then we call $M$ \textit{local}.  Let $\beta$ be minimal with $p^\beta$ in $M$ and note that $\beta \ge \alpha$.  Then, by \cite[Theorem 3.1]{BCS}, if either $\alpha=\beta=1$ or if $\alpha<\beta\le 2\alpha$, then $\Delta(M)=\{1\}$.  So, for instance, all the elements of both $M_{4,6}$ and $M_{96,160}$ satisfy Proposition \ref{delta1}.  Thus, by \cite[Theorem 1]{BCCM2}, both $M_{4,6}$ and $M_{96,160}$ are monoids with accepted length density but not accepted elasticity.  

If $\gcd(a,b)>1$ is composite, then we call $M$ \textit{global}.  Among the various conditions given in \cite{BCS} which force a global ACM to have 
$\Delta(M)=\{1\}$ is Corollary 4.5 which shows that this is the case when $a=b$.  So while it is easy to verify that $\Delta(M)=\emptyset$ when $M=p\mathbb{N}\cup \{1\}$ for $p$ a prime, $\Delta(M)=\{1\}$ for monoids like $6\mathbb{N}\cup\{1\}$ and $9\mathbb{N}\cup\{1\}$.  Moreover, in the case where $a=b$ is composite and not a power of a prime, then it is easy to verify that $\rho(M)=\infty$.  Thus $0<\ld (M)$ does not imply that $\rho(M)<\infty$.
\item The last example motivates another class of monoids.  An atomic monoid $M$ is called \textit{bifurcus} if every nonzero, nonunit of $M$ can be factored into a product of two irreducibles.  By 
\cite[Theorem 1.1 (3)]{AAHKMPR}, for any $x\in M^{LI}$, $\mathsf{L}(x)=\{2,3,\ldots, L(x)\}$, so by definition, $\Delta(M)=\{1\}$. 
Various examples of rings that satisfy this condition can be found in \cite{AAHKMPR,BPAAHKMR}, including: 
\begin{enumerate}
\item 
$n\mathbb{Z}$ for $n$ not a prime power (bifurcus ring, without identity); 

\item 
$(m\mathbb{Z}) \times  (n\mathbb{Z})$ for $m,n$ each greater than 1 (bifurcus ring, without identity); 

\item 
the subring of $n\times n$ matrices consisting of matrices where all entries are identical integers and $n$ is not a prime power (bifurcus ring, without identity); and 

\item 
rank one matrices with entries from $\mathbb{N}$. 

\end{enumerate}
\end{enumerate}
\end{example}

We now provide a more general proposition.

\begin{proposition}\label{basicbounds}
If $x\in M^{LI}$, then 
\begin{equation}\label{fund1}
\frac{1}{\max \Delta(x)}\le \ld(x)\le \frac{1}{\min \Delta(x)},
\end{equation}
with equality on either side implying $|\Delta(x)|=1$, which in turn implies equality on both sides.  Furthermore, 
\begin{equation}\label{fund2} 
\frac{1}{\sup \Delta(M)}\le \ld(M)\le \frac{1}{\min \Delta(M)},
\end{equation}
with equality on the right side implying $|\Delta(M)|=1$, which in turn implies equality on both~sides.
\end{proposition}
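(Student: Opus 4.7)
The plan is to unpack $\ld(x)$ as a ratio whose denominator is a sum of consecutive differences in $\mathsf{L}(x)$, then bound that sum term-by-term. Write $\mathsf{L}(x) = \{n_1 < n_2 < \cdots < n_k\}$ with $k = |\mathsf{L}(x)|$, and set $d_i = n_{i+1} - n_i$ for $1 \le i < k$. The key identity is
\[
\mathsf{L}^\Delta(x) = n_k - n_1 = \sum_{i=1}^{k-1} d_i,
\]
so $\ld(x) = (k-1)/\sum_{i=1}^{k-1} d_i$. Since each $d_i \in \Delta(x)$, the sum lies between $(k-1)\min\Delta(x)$ and $(k-1)\max\Delta(x)$, which immediately gives (\ref{fund1}).

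For the equality clauses in (\ref{fund1}), observe that equality on either side forces every $d_i$ to equal $\min\Delta(x)$ (resp.\ $\max\Delta(x)$), so every consecutive gap is the same and $|\Delta(x)| = 1$; conversely, $|\Delta(x)| = 1$ makes the two bounds coincide with $\ld(x)$, so equality holds on both sides.

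For (\ref{fund2}), the lower bound is immediate from (\ref{fund1}) together with $\max\Delta(x) \le \sup\Delta(M)$ for all $x \in M^{LI}$. For the upper bound, note that $d := \min\Delta(M)$ is achieved: there exists $y \in M^{LI}$ with $d \in \Delta(y)$, and since $\Delta(y) \subseteq \Delta(M)$ forces $\min\Delta(y) \ge d$, we get $\min\Delta(y) = d$, whence $\ld(M) \le \ld(y) \le 1/\min\Delta(y) = 1/\min\Delta(M)$.

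Finally, to show equality on the right of (\ref{fund2}) implies $|\Delta(M)| = 1$, I argue the contrapositive: if some $d' \in \Delta(M)$ satisfies $d' > \min\Delta(M) =: d$, pick any $y \in M^{LI}$ with $d' \in \Delta(y)$. Writing $\ld(y) = (k-1)/\sum d_i$ as above, one of the $d_i$ is $\ge d'$ while the rest are $\ge d$, so $\sum d_i \ge (k-2)d + d' > (k-1)d$ (using $k \ge 2$ since $y \in M^{LI}$). Hence $\ld(y) < 1/d$, forcing $\ld(M) < 1/\min\Delta(M)$, a contradiction. Once $|\Delta(M)| = 1$, every $x \in M^{LI}$ has $\Delta(x) = \{d\}$ and thus $\ld(x) = 1/d$, so both bounds collapse to $\ld(M) = 1/d$. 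The arguments are essentially bookkeeping; the only mildly delicate step is the strict inequality in the last paragraph, where one must remember that $y \in M^{LI}$ guarantees $k \ge 2$ so that the ``averaging'' estimate is nontrivial.
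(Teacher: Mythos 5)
Your proof is correct and follows essentially the same route as the paper's: telescoping $L(x)-\ell(x)$ into consecutive gaps, bounding each gap by $\min\Delta(x)$ and $\max\Delta(x)$, and noting that equality forces all gaps to coincide. The only difference is that you spell out the deduction of \eqref{fund2} (which the paper dismisses as ``easily following''), and your contrapositive argument for the right-hand equality clause is a correct and welcome elaboration of that step.
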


\begin{proof}
Suppose $\mathsf{L}(x)=\{l_0,l_1,\ldots, l_k\}$ with $l_0<l_1<\cdots <l_k$.  Then
\[
\begin{array}{lcl}
L(x)-\ell(x) 
& = &  l_k-l_0 \\
& = & (l_k-l_{k-1})+(l_{k-1}-l_{k-2})+\cdots + (l_1-l_0) \\ & = & \sum_{i=1}^k l_i - l_{i-1}.\end{array}
\]
Since each summand is an element of $\Delta(x)$,
\[ k\min \Delta(x) \le \sum_{i=1}^k l_i - l_{i-1} \le k \max \Delta(x).\]
Dividing throughout by $|\mathsf{L}(x)|-1=k$, we get 
\[\min \Delta(x) \le \frac{L(x) - l(x)}{|\mathsf{L}(x)|-1}\le \max \Delta(x).\]
Taking reciprocals gives the double inequality in the theorem statement.  If either inequality is actually equality, then all summands are equal, and hence $|\Delta(x)|=1$.  The last inequalities now easily follow.
\end{proof}

Immediately we obtain the following.

\begin{corollary}\label{deltad} Let $M$ be an atomic monoid.  If $\Delta(M)=\{d\}$ for some positive integer $d$, then $\ld (x)=\frac{1}{d}$ and consequently $\overline{\ld}(x)=\frac{1}{d}$ for all $x\in M^{LI}$.  It follows that $\ld(M)=\frac{1}{d}$ and that the length density of $M$ is accepted.  
\end{corollary}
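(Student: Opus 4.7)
The plan is to read this off as a direct application of Proposition~\ref{basicbounds}. Fix $x \in M^{LI}$. Since $M^{LI}$ consists of elements with $L(x) > \ell(x)$, the length set $\mathsf{L}(x)$ has at least two elements, and hence $\Delta(x)$ is nonempty. Because $\Delta(x) \subseteq \Delta(M) = \{d\}$, this forces $\Delta(x) = \{d\}$, so $\min\Delta(x) = \max\Delta(x) = d$. The double inequality \eqref{fund1} then collapses to $\ld(x) = 1/d$.

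Next I would handle the asymptotic statement. What needs checking is that $x^n \in M^{LI}$ for every $n \geq 1$, so that $\ld(x^n)$ is defined. This is immediate: if $x = a_1\cdots a_k = b_1 \cdots b_\ell$ are two factorizations with $k \neq \ell$, then for each $n \geq 1$ the products $(a_1\cdots a_k)^n$ and $(a_1\cdots a_k)^{n-1}(b_1\cdots b_\ell)$ are factorizations of $x^n$ of different lengths, so $x^n \in M^{LI}$. Applying the first part of the corollary to $x^n$ gives $\ld(x^n) = 1/d$ for all $n$, whence the constant sequence converges and $\overline{\ld}(x) = 1/d$.

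Finally, the monoid-level statement is formal: $\ld(M) = \inf\{\ld(x) \mid x \in M^{LI}\} = \inf\{1/d\} = 1/d$, and this infimum is attained by every element of $M^{LI}$ (which is nonempty, since otherwise $\Delta(M) = \emptyset \neq \{d\}$), so the length density of $M$ is accepted. No step here is an obstacle; the only mild point worth stating explicitly is the stability $x^n \in M^{LI}$, which the proof should record.
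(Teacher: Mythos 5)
Your proposal is correct and follows exactly the route the paper intends: the paper presents this corollary as an immediate consequence of Proposition~\ref{basicbounds}, and your argument is precisely that derivation, with the additional (worthwhile) observation that $x^n\in M^{LI}$ — which the paper also covers by noting earlier that $M^{LI}$ is an ideal, so $x^n = x\cdot x^{n-1}\in M^{LI}M\subseteq M^{LI}$.
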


\begin{example}\label{extype2}
We offer some concrete examples to illustrate the last two results.
\begin{enumerate}
\item  
Let $M$ be a numerical monoid generated by an arithmetic sequence of integers (i.e., $M = \langle a, a+d, \ldots ,a+kd\rangle$ where $\gcd(a,d) = 1$ and $k<a$).   By \cite[Theorem 3.9]{BCKR}, $\Delta(M)=\{d\}$, and by Corollary \ref{deltad}, $\ld (x)=\overline{\ld}(x)=\frac{1}{d}$ for all nonunits $x\in M$.  As~such, $\ld(M)=\frac{1}{d}$. 

\item 
Let $G$ be an abelian group and $\mathcal{F}(G)$ the free abelian monoid on $G$.
We write the elements of $\mathcal{F}(G)$ in the form
$
X=g_1\cdots g_l = \prod_{g\in G} g^{v_g(X)},
$
The submonoid
\[
\mathcal{B}(G)=\big\{\!\textstyle\prod_{g\in G} g^{v_g}\in \mathcal{F}(G)\,\mid\,
\sum_{g\in G}v_g g=0\big\}
\]
is known as the \textit{block
monoid on} $G$ and its elements are referred to as \textit{blocks} over $G$.  If $S$ is a subset of $G$, then the submonoid
\[
\mathcal{B}(G,S)=\big\{\!\textstyle\prod_{g\in G} g^{v_g}\in \mathcal{B}(G)\,\mid\,
v_g=0\mbox{ if }g\not\in S\big\}
\]
of $\mathcal{B}(G)$ is called \textit{the restriction of
}$\mathcal{B}(G)$ \textit{to} $S$.  If $S$ generates $G$, then $\mathcal{B}(G,S)$ is a Krull monoid with divisor class group $G$.  If $n\geq 2$, then set
$\mathbb{Z}_n=\{\overline{0}, \overline{1},\ldots ,\overline{n-1}\}$. Consider the block monoid $\mathcal{B}(\mathbb{Z}_n,\{\overline{1},\overline{n-1}\})$.  It is easy to argue that the irreducible elements of $\mathcal{B}(\mathbb{Z}_n,\{\overline{1},\overline{n-1}\})$ are
\[
\overline{1}^{\; n}; \overline{n-1}^{\; n}; \overline{1}\;\overline{n-1},
\]
and the only relation amongst the irreducibles is $\overline{n-1}^{\; n} \cdot \overline{1}^{\; n}=(\overline{1}\; \overline{n-1})^n$.  By \cite[Lemma 2.8]{CS1}, 
$\Delta(\mathcal{B}(\mathbb{Z}_n,\{\overline{1},\overline{n-1}\})=\{n-2\}$, and hence $\ld(\mathcal{B}(\mathbb{Z}_n,\{\overline{1},\overline{n-1}\})=\frac{1}{n-2}$.

\item 
The bounds in Proposition \ref{basicbounds} may be strict in general.  Consider the numerical monoid $M=\langle 6,9,20\rangle$.  Here $\Delta(M)=\{1,2,3,4\}$.  
Using techniques from \cite{BOPa,CO}, it can be shown that $\ld(M)=\ld(60)=\frac{4}{7}$, and that $\ld(x)\to 1$ as $x\to \infty$.  

\end{enumerate} 
\end{example}

We now work toward the other extreme and start with a definition.

\begin{definition}\label{kainrathdef}
We say an atomic monoid $M$ has the \textit{Kainrath Property} if for every nonempty finite subset $L \subset \{2,3,4,\ldots \}=\mathbb{N}-\{1\}$ there exists an element $x\in M^{LI}$ such that $\mathsf{L}(x) = L$.
\end{definition}

Clearly a monoid $M$ with the Kainrath property satisfies $\Delta(M) = \mathbb{N}$.  
We also immediately deduce the following.  

\begin{corollary}\label{kainrathcor}
If $M$ has the Kainrath property, then 
$
\{\ld (x) \mid x\in M^{LI}\} = (0,1].
$
Hence, 
$
\ld(M) =0,
$
and $M$ does not have accepted length density.
\end{corollary}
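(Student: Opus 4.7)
The plan is to use the Kainrath property to realize every rational in $(0,1]$ as the length density of some element, which then immediately forces $\ld(M) = 0$ and rules out acceptance. Fix a rational $r = p/q \in (0,1]$ with $p,q$ positive integers and $p \leq q$, and consider the finite set
\[
L = \{2, 3, \ldots, p+1\} \cup \{q+2\} \;\subseteq\; \{2,3,4,\ldots\}.
\]
Since $q \geq p$, the element $q+2$ is strictly greater than $p+1$, so $|L| = p+1$ and $\max L - \min L = (q+2) - 2 = q$. By the Kainrath property, there exists $x \in M^{LI}$ with $\mathsf{L}(x) = L$, and for this $x$,
\[
\ld(x) \;=\; \frac{|\mathsf{L}(x)|-1}{L(x)-\ell(x)} \;=\; \frac{p}{q} \;=\; r.
\]
Combined with the already-established inclusion $\ld(x) \in (0,1]$ coming from Definitions~\ref{basic} and Proposition~\ref{basicbounds}, this establishes the claimed equality of sets.

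Now that every rational in $(0,1]$ occurs as $\ld(x)$ for some $x \in M^{LI}$, I would take $x_n \in M^{LI}$ with $\ld(x_n) = 1/n$ (using the construction above with $p=1$, $q=n$) and observe that $\ld(x_n) \to 0$. Hence $\ld(M) = \inf\{\ld(x) : x \in M^{LI}\} = 0$. However, since $|\mathsf{L}(x)| - 1 \geq 1$ for every $x \in M^{LI}$, we have $\ld(x) > 0$ for every such $x$, so this infimum is never attained and the length density of $M$ is not accepted.

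There is no real obstacle: the whole argument reduces to choosing the length sets so that $|L| - 1$ and $\max L - \min L$ come out as the desired numerator and denominator. The only care is checking the two corner cases, namely $r = 1$ (where $L = \{2,\ldots,p+2\}$ is an interval of length $p+1$) and $p = 1$ (where $L = \{2, q+2\}$ is a two-element set), both of which the construction above handles uniformly.
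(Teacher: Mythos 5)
Your construction is correct and is exactly the argument the paper leaves implicit (the corollary is stated as an immediate deduction from Definition~\ref{kainrathdef}): realizing $L=\{2,\ldots,p+1\}\cup\{q+2\}$ gives $\ld(x)=p/q$, and the density of these values in $(0,1]$ forces $\ld(M)=0$, while $\ld(x)>0$ for every $x\in M^{LI}$ rules out acceptance. One small caveat: since every $\ld(x)$ is rational, what your argument (and, strictly speaking, the paper's statement) actually yields is $\{\ld(x)\mid x\in M^{LI}\}=\mathbb{Q}\cap(0,1]$ rather than all of $(0,1]$, so your claim of ``equality of sets'' should be read in that corrected form; this has no effect on the two ``Hence'' conclusions.
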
 

\begin{example}\label{kainrathex}
We now examine several families of Kainrath monoids.  
\begin{enumerate}
\item If $M$ is a Krull monoid with infinite divisor class group and a prime divisor in every divisor class, then $M$ satisfies the Kainrath property by \cite{Ka} and thus $\ld(M)=0$.  
The assumption that every divisor class contain a prime divisor is crucial here.  For instance, there are Krull domains with divisor class group $\mathbb{Z}$ which are half-factorial domains (see~\cite{ACS1}).  Recall that if $D$ is a Krull domain with class group $G$, then $D[X]$ is another Krull domain with class group $G$, but for $D[X]$ each ideal class of $G$ contains a prime divisor (see \cite[Theorem 14.3]{Fos}).  Thus, if $D$ is a Krull domain with infinite divisor class group, then $D[X]$ has the Kainrath property and hence $\ld (D[X])=0$ (even if these two facts do not hold for $D$).
\item An additive submonoid $P$ of $\mathbb{Q}_{\geq 0}$ (the nonnegative rationals) is known as a \textit{Puiseux monoid}.  These monoids are in some sense a natural generalization of numerical monoids.  If $S$ is a numerical monoid, then $|\Delta(S)| <\infty$ by \cite[Corollary 2.3]{BCKR}.  This fails in general for Puiseux monoids; in fact, by \cite[Theorem 3.6]{FG}, there exists a Puiseux monoid $P$ which has the Kainrath property.  Hence $\ld(P)=0$.   
\item 
In a recent paper Frisch \cite{Fr} showed 
$
\mathrm{Int}(\mathbb{Z}) = \{f(X)\in \mathbb{Q}[X]\,|\, f(z)\in \mathbb{Z} \mbox{  for all  }z\in \mathbb{Z}\},
$
called the ring of integer-valued polynomials over $\mathbb{Z}$, is Kainrath.  So, $\ld(\mathrm{Int}(\mathbb{Z}))=0$.
\item By \cite[Proposition~4.9 and Theorem~4.10]{FT} for every $n\in\mathbb{N}$, the power monoid $\mathcal{P}_{fin,0}(\mathbb{N})$ has an element $x_n$ with $\Delta(x_n)=\{n\}$; hence $\ld(x_n)=\frac{1}{n}$.  Consequently, we obtain $\ld(\mathcal{P}_{fin,0}(\mathbb{N}))=0$ but it is not accepted.  It is open whether or not $\mathcal{P}_{fin,0}(\mathbb{N})$
has the Kainrath property (see \cite[page~292]{FT}).
\end{enumerate}
\end{example}

We close this section by constructing two extremal monoids:\ one with rational length density that is not accepted (Example~\ref{nonacceptedld}), and another with infinite delta set but positive length density (Example~\ref{infinitedelta}).  

\begin{lemma}\label{directsumlemma}
If $x,y\in M^{LI}$ with $L(xy) = L(x) + L(y)$ and $l(xy) = l(x) + l(y)$, then
\[
\ld(xy)\ge \min(\ld(x),\ld(y)).
\]
Moreover, this inequality is strict if $\ld(x) \neq \ld(y)$.
\end{lemma}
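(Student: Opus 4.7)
The plan is to combine a Minkowski-sum count of length sets with the mediant inequality from elementary arithmetic. Recall that by the very definition of the length set, $\mathsf{L}(x) + \mathsf{L}(y) \subseteq \mathsf{L}(xy)$, since concatenating a factorization of $x$ with one of $y$ yields a factorization of $xy$ whose length is the sum. The hypotheses $L(xy) = L(x)+L(y)$ and $\ell(xy)=\ell(x)+\ell(y)$ pin down both the extremes of $\mathsf{L}(xy)$, and in particular give
\[
\mathsf{L}^\Delta(xy) = \mathsf{L}^\Delta(x) + \mathsf{L}^\Delta(y).
\]

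Next I would invoke the elementary fact that for any two finite nonempty subsets $A,B \subseteq \mathbb{Z}$,
\[
|A+B| \ge |A| + |B| - 1,
\]
which follows by listing the smallest element of $A$ paired with each element of $B$, then the remaining elements of $A$ paired with the largest element of $B$. Applying this to $A = \mathsf{L}(x)$ and $B = \mathsf{L}(y)$ and using the inclusion above yields
\[
|\mathsf{L}(xy)| - 1 \ge \bigl(|\mathsf{L}(x)|-1\bigr) + \bigl(|\mathsf{L}(y)|-1\bigr).
\]
Dividing by $\mathsf{L}^\Delta(xy) = \mathsf{L}^\Delta(x)+\mathsf{L}^\Delta(y)$ we obtain
\[
\ld(xy) \;\ge\; \frac{(|\mathsf{L}(x)|-1)+(|\mathsf{L}(y)|-1)}{\mathsf{L}^\Delta(x)+\mathsf{L}^\Delta(y)}.
\]

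Now I would finish with the mediant inequality: for positive real numbers $b,d$ and nonnegative $a,c$, $\frac{a+c}{b+d}$ always lies (weakly) between $\frac{a}{b}$ and $\frac{c}{d}$, and strictly between them whenever $\frac{a}{b}\neq\frac{c}{d}$. Setting $\frac{a}{b} = \ld(x)$ and $\frac{c}{d} = \ld(y)$ gives the stated bound $\ld(xy) \ge \min(\ld(x), \ld(y))$, and when $\ld(x)\neq\ld(y)$ the mediant is strictly greater than the minimum, which propagates to a strict inequality $\ld(xy) > \min(\ld(x),\ld(y))$.

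The main obstacle is really minor: it is just making sure the Minkowski-sum bound $|A+B|\ge|A|+|B|-1$ is applied cleanly so that the numerator estimate lines up exactly with the additive behavior $\mathsf{L}^\Delta(xy)=\mathsf{L}^\Delta(x)+\mathsf{L}^\Delta(y)$ in the denominator. Once both sides of the fraction are expressed additively, the mediant step is purely formal, so no serious technical difficulty remains.
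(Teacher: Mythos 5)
Your proof is correct and takes essentially the same route as the paper's own argument: the inclusion $\mathsf{L}(x)+\mathsf{L}(y)\subseteq\mathsf{L}(xy)$, the sumset bound $|A+B|\ge|A|+|B|-1$, and the mediant inequality to conclude. The only cosmetic difference is that you make the identity $\mathsf{L}^\Delta(xy)=\mathsf{L}^\Delta(x)+\mathsf{L}^\Delta(y)$ explicit before forming the mediant, which the paper does implicitly in its displayed chain of inequalities.
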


\begin{proof}
We have $\mathsf{L}(xy)\supseteq \mathsf{L}(x)+\mathsf{L}(y)$, since we can always factor $xy$ by factoring $x$ and $y$ separately, and concatenating.  So, $|\mathsf{L}(xy)|\ge |\mathsf{L}(x)|+|\mathsf{L}(y)|-1$, by a simple observation on sizes of set sums ($|A+B|\ge |A|+|B|-1$).  Hence
\[\ld(xy)=\frac{|\mathsf{L}(xy)| -1}{L(xy)-l(xy)}\ge\frac{(|\mathsf{L}(x)|-1) + (|\mathsf{L}(y)|-1)}{(L(x)-l(x)) + (L(y)-l(y))}=\ld(x)\oplus \ld(y)\]
where $\oplus$ denotes the mediant $\frac{a}{b}\oplus\frac{c}{d}=\frac{a+c}{b+d}$.  A well-known property of the mediant is that $\frac{a}{b}\oplus\frac{c}{d}$ lies in the interval between $\frac{a}{b}$ and $\frac{c}{d}$ (in the interior unless $\frac{a}{b}=\frac{c}{d}$).
\end{proof}

\begin{theorem}\label{directsumthm}
For any collection of monoids $M_i$, we have
\[
\textstyle\ld\big(\!\bigoplus_i M_i \big) = \inf_i(\ld(M_i)).
\]
\end{theorem}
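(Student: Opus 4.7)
The plan is to exploit the fact that factorizations in a direct sum split across summands. Write $M = \bigoplus_i M_i$ and represent elements as $x = (x_i)_i$ with only finitely many $x_i$ nonunits. Because an atom of $M$ must be an atom in a single coordinate and a unit elsewhere, every factorization of $x$ is a concatenation of factorizations of its coordinates; thus
\[
\mathsf{L}(x) = \sum_i \mathsf{L}(x_i),
\]
a (finite) Minkowski sum with the convention $\mathsf{L}(x_i) = \{0\}$ for $x_i$ a unit. In particular $L(x) = \sum_i L(x_i)$ and $\ell(x) = \sum_i \ell(x_i)$, so the direct-sum hypotheses of Lemma~\ref{directsumlemma} hold for any two coordinates.

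For the inequality $\ld(M) \le \inf_i \ld(M_i)$, I would take any index $i$ and any $y \in M_i^{LI}$ and embed it as the element of $M$ with $y$ in coordinate $i$ and units elsewhere. Then $\mathsf{L}^\Delta$ and $|\mathsf{L}|$ are preserved, giving $\ld(x) = \ld(y)$. Taking the infimum over $y$ yields $\ld(M) \le \ld(M_i)$ for each $i$, hence $\ld(M) \le \inf_i \ld(M_i)$.

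For the reverse inequality, take an arbitrary $x \in M^{LI}$. The set $I = \{i : x_i \in M_i^{LI}\}$ is finite and, because $\mathsf{L}^\Delta(x) = \sum_i \mathsf{L}^\Delta(x_i) > 0$, nonempty. Coordinates outside $I$ (units, or elements with singleton length set) contribute $0$ to both the numerator $|\mathsf{L}(x)|-1$ and the denominator $\mathsf{L}^\Delta(x)$ when comparing the mediant, so they can be absorbed harmlessly. Iterating Lemma~\ref{directsumlemma} across the finitely many coordinates in $I$ — each step legitimate because the $L$ and $\ell$ identities above verify its hypothesis — gives
\[
\ld(x) \;\ge\; \min_{i \in I} \ld(x_i) \;\ge\; \inf_i \ld(M_i).
\]
Taking the infimum over $x \in M^{LI}$ completes the proof.

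The main obstacle is mostly bookkeeping rather than a genuine difficulty: one must be careful that the iteration of Lemma~\ref{directsumlemma} truly only involves finitely many steps (guaranteed by the finiteness of the support of any element of the direct sum) and that coordinates with trivial length set do not spoil the mediant argument. The factorization-splits-across-summands observation is the substantive content, and once it is in place both inequalities fall out cleanly.
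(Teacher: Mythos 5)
Your proposal is correct and follows essentially the same route as the paper: both directions rest on the observation that length sets split as Minkowski sums across coordinates, with the mediant inequality of Lemma~\ref{directsumlemma} iterated over the finitely many nontrivial coordinates for one inequality and the coordinate embedding (which preserves length sets) for the other. Your added care about coordinates with singleton length sets, which the paper's terser proof glosses over, is a welcome but minor refinement rather than a different argument.
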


\begin{proof}
For any formal product $\prod_i x_i \in \bigoplus_i M_i$ of elements with each $x_i \in M_i$ and at least one $x_i \in M_i^{LI}$, the hypotheses of Lemma~\ref{directsumlemma} are satisfied, implying 
$\ld\big(\!\bigoplus_i M_i \big) \ge \inf_i(\ld(M_i))$.
Conversely, the image of any $x \in M_i$ in $\bigoplus_i M_i$ has identical length set, and thus equal length density.  This completes the proof.  
\end{proof}

\begin{example}\label{nonacceptedld}
Fix $i \ge 3$, and let~$M_i$ be the free abelian monoid on atoms $a_1, \ldots, a_i$, where
\[ a_1^3 = a_2^4 = a_3^6 = a_4^8 = \cdots = a_i^{2i} \]
are the minimal relations on the atoms.  Letting $M = \bigoplus_{i \ge 3} M_i$, it is clear that $\Delta(x) = \{1,2\}$ for every $x \in M$ with nonunique factorization, meaning the length density $\ld(M) = \tfrac{1}{2}$ obtained from Theorem~\ref{directsumthm} is not accepted.  
\end{example}

\begin{example}\label{infinitedelta}
For each $i \ge 2$, it is not hard to show, again using techniques from \cite{BOPa,CO}, that the numerical monoid $M_i = \<2i, 3i, 6i + 1\>$ has $\ld(M_i) = \tfrac{1}{2}$ achieved at $x_i = i(6i + 1) \in M_i$, which has length set 
$$\mathsf{L}_{M_i}(x_i) = \{i\} \cup \{2i+1, 2i+2, \ldots, 3i\}.$$
As such, $M = \bigoplus_{i \ge 2} M_i$ satisfies $|\Delta(M)| = \infty$ but $\ld(M) = \tfrac{1}{2} > 0$.  
\end{example}

\section{Nonrational and Accepted Length Density}
\label{sec:accepted}

A fundamental question early in the study of elasticity was whether or not an integral domain can have irrational elasticity.   In \cite[Theorem 3.2]{AndAnd} the authors show that for any real number $\alpha >1$, there is a Dedekind domain $D$ with $\rho(D)=\alpha$ (we note that if $\alpha\not\in \mathbb{Q}$, then $D$ must necessarily have infinite class group).   We now prove a somewhat similar result for length density, but use a completely different construction in the spirit of Examples \ref{nonacceptedld} and \ref{infinitedelta}. 

Let $a,b\in\mathbb{N}$ with $b>a$.  Let $c\in [0,1]$.  For each $i\in\mathbb{N}$, set $k(i)=\lceil ic(b-a)\rceil$.  We will now define the monoid $M(a,b,c)$, as  the free abelian monoid on atoms $\{q_{i,j}:i,j\in\mathbb{N}\}$, with minimal relations:
\[\forall i\in\mathbb{N}, ~~~~q_{i,ia}^{ia} = q_{i,ia+1}^{ia+1}=q_{i,ia+2}^{ia+2}=\cdots=q_{i,ia+k(i)}^{ia+k(i)}=q_{i,ib}^{ib}.\]

\begin{proposition}\label{niceconstruct}
If $a < b \in \mathbb{N}$ and $c\in [0,1]$, then $\rho(M(a,b,c))=\frac{b}{a}$ and $\ld(M(a,b,c))=c$.
\end{proposition}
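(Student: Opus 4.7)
The plan is to decompose $M(a,b,c)$ as a direct sum of simpler monoids $M_i'$, one for each $i$, and apply Theorem~\ref{directsumthm}. Since the defining relations only involve atoms $q_{i,j}$ sharing the first index~$i$, we have $M(a,b,c) = \bigoplus_{i \in \mathbb{N}} M_i'$, where $M_i'$ is the submonoid generated by $q_{i,ia}, q_{i,ia+1}, \ldots, q_{i,ia+k(i)}, q_{i,ib}$ subject to the defining relation cycle (the remaining atoms $q_{i,j}$ outside this cycle form a free half-factorial summand contributing nothing to $\ld$). Write $m_i = i(b-a)$ and $S_i = \{ia, ia+1, \ldots, ia+k(i)\} \cup \{ib\}$, so that $|S_i| = k(i) + 2$ whenever $k(i) < m_i$ (automatic for all sufficiently large $i$ when $c < 1$).

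The next step is to parametrize elements of $M_i'$ explicitly. Given $\prod_j q_{i,j}^{n_j}$ and the division $n_j = jb_j + e_j$ with $0 \le e_j < j$, the relations allow us to freely redistribute the ``batches'' $q_{i,j}^j$ (for $j \in S_i$) among the generators without changing either $(e_j)_{j\in S_i}$ or the total batch count $B := \sum_j b_j$. I would verify that these are precisely the invariants, so that each element of $M_i'$ is uniquely determined by $(e_j, B)$ and its length set equals the translate
\[
\mathsf{L}(y) = \Bigl(\textstyle\sum_j e_j\Bigr) + S_i^{+B},
\]
where $S_i^{+B} = \{\sum_j j b_j' : \sum_j b_j' = B,\ b_j' \ge 0\}$ is the $B$-fold sumset of $S_i$; in particular $L(y) - \ell(y) = B m_i$.

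With this structural description in place, the elasticity claim is immediate: for $B \ge 1$, $\rho(y) = (\sum_j e_j + Bib)/(\sum_j e_j + Bia) \le b/a$, with equality attained at $x_i := q_{i,ia}^{ia}$ (where $e_j = 0$ and $B = 1$), giving $\rho(M(a,b,c)) = b/a$. For the length density, the key input is the elementary sumset inequality $|A + C| \ge |A| + |C| - 1$ for nonempty finite $A, C \subseteq \mathbb{Z}$, which iterates to $|S_i^{+B}| \ge B(|S_i| - 1) + 1 = B(k(i) + 1) + 1$ in the generic case $k(i) < m_i$. Thus
\[
\ld(y) = \frac{|S_i^{+B}| - 1}{B m_i} \ge \frac{k(i) + 1}{m_i} = \ld(x_i),
\]
so $\ld(M_i') = \ld(x_i)$ for each $i$ (the boundary case $k(i) = m_i$ forces $\ld \equiv 1$ directly). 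Finally, Theorem~\ref{directsumthm} together with the ceiling sandwich $ic(b-a) \le \lceil ic(b-a)\rceil < ic(b-a) + 1$ yields $c < \ld(x_i) \le c + 2/m_i$, so $\ld(M(a,b,c)) = \inf_i \ld(x_i) = c$.

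I expect the main obstacle to be verifying the structural parametrization in the second step: namely that every element of $M_i'$ is captured by a unique pair $(e_j, B)$ with length set exactly $\sum_j e_j + S_i^{+B}$. Once this is settled, both the elasticity computation and the length density bound reduce to the well-known sumset inequality together with the straightforward ceiling estimate on $k(i)/m_i$.
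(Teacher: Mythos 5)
Your proof is correct, and it rests on the same two pillars as the paper's --- the elements $q_{i,ia}^{ia}$, whose length set $\{ia,\ldots,ia+k(i)\}\cup\{ib\}$ realizes $\rho=b/a$ and forces $\ld(M)\le c$ via the ceiling estimate --- but you organize the bound for arbitrary elements differently. The paper does not invoke Theorem~\ref{directsumthm}: it takes an arbitrary $x$, writes $x=x'\prod_i q_{i,ia}^{x(i)ia}$ with $x'$ the inert gcd of all factorizations, observes that $\mathsf{L}(x)$ contains the interval $\bigl[\,|x'|+\sum_i x(i)ia,\ |x'|+\sum_i x(i)(ia+k(i))\,\bigr]$ (a set sum of intervals is an interval) together with the maximum element $|x'|+\sum_i ib\,x(i)$, and then obtains $\rho(x)\le b/a$ and $\ld(x)\ge c$ in a single mediant-style computation over all indices $i$ at once. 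You instead split off each relation cycle as a summand $M_i'$, prove the sharper per-component statement $\ld(M_i')=\ld(x_i)$ via the iterated sumset inequality $|S_i^{+B}|\ge B(|S_i|-1)+1$, and let Theorem~\ref{directsumthm} do the gluing; your cardinality bound $B(k(i)+1)+1$ is in fact marginally stronger than the paper's interval count $\sum_i x(i)k(i)+1$, though both suffice. What your route buys is an explicit structural description --- the parametrization of elements of $M_i'$ by the residues $e_j$ and the batch count $B$, with $\mathsf{L}(y)=\sum_j e_j+S_i^{+B}$ exactly --- which the paper uses only implicitly (for instance when it asserts the maximum of $\mathsf{L}(x)$). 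The verification you flag as the main obstacle is routine: each trade $q_{i,j}^{j}\leftrightarrow q_{i,j'}^{j'}$ preserves every residue $n_j\bmod j$ and the total batch count, and any two configurations sharing these invariants are connected by moving one batch at a time, so no gap remains there. The one step you should add a line for is the passage from $\rho(M_i')=b/a$ to $\rho(M(a,b,c))=b/a$: the elasticity of a product across components is a mediant of the componentwise ratios and hence bounded by their maximum, a point the paper's single global computation absorbs automatically.
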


\begin{proof} Set $M=M(a,b,c)$.  We first observe that for any $t\in\mathbb{N}$, we have 
\[
tic(b-a)+1\le tk(i)+1<tic(b-a)+1+t,
\]
and hence 
\begin{equation}\label{last}
c+\frac{1}{ti(b-a)}\le \frac{tk(i)+1}{ti(b-a)} < c+\frac{t+1}{ti(b-a)}.
\end{equation}
Note that each atom appears in at most one minimal relation.  We may thus calculate that $\mathsf{L}(q_{i,ia}^{ia})=\{ia,ia+1,ia+2,\ldots, ia+k(i),ib\}$.  We have $\rho(q_{i,ia}^{ia})=\frac{ib}{ia}=\frac{b}{a}$.  This proves that $\rho(M)\ge \frac{b}{a}$.  We also have $\ld(q_{i,ia}^{ia})=(k(i)+1)/(ib-ia)$.  By \eqref{last} with $t=1$, $\ld(q_{i,ia}^{ia})>c$ and also $\ld(q_{i,ia}^{ia})\to c$ as $i\to\infty$.  This proves that $\ld(M)\le c$.

Now, take $t\in\mathbb{N}$, and consider $\mathsf{L}(q_{i,ia}^{tia})$.  The minimum element is $tia$ and the maximum is $tib$. It contains the interval $[tia,tia+tk(i)]$, being the set sum of $t$ intervals.  Note that $tia+tk(i)<tib$.  Hence, $|\mathsf{L}(q_{i,ia}^{tia})|\ge tk(i)+2$, since the interval provides $tk(i)$ elements, and the right endpoint provides one more.  Hence, $\ld(q_{i,ia}^{tia})\ge\frac{tk(i)+1}{ti(b-a)}$.  By \eqref{last}, $\ld(q_{i,ia}^{tia})>c$.

Consider now an arbitrary element $x\in M$.  We write $x=x'\prod_{i\in\mathbb{N}} q_{i,ia}^{x(i)ia}$, where we choose $x(i)\in\mathbb{N}_0$ to be maximal. The leftover atoms, those dividing $x'$, are all inert, as $x'$~is the $\gcd$ of all factorizations of $x$.  We have 
\[\mathsf{L}(x)=\{|x'|\}+\sum_{i\in\mathbb{N}}\mathsf{L}(q_{i,ia}^{x(i)ia})\supseteq \{|x'|\}+\sum_{i\in\mathbb{N}}[x(i)ia, \, x(i)(ia+k(i))].\]
Now, the set sum of intervals is also an interval, so $\mathsf{L}(x)$ contains the interval \[
I=\bigg[|x'|+\sum_{i\in \mathbb{N}} x(i)ia, \, |x'|+\sum_{i\in\mathbb{N}}x(i)(ia+k(i))\bigg].
\]
Also,  $\mathsf{L}(x)$ contains, as maximum element, $|x'|+\sum_{i\in\mathbb{N}}ibx(i)$,  so we calculate
\[
\rho(x)=\frac{|x'|+\sum_{i\in\mathbb{N}}ibx(i)}{|x'|+\sum_{i\in\mathbb{N}}iax(i)}\le\frac{\sum_{i\in\mathbb{N}}ibx(i)}{\sum_{i\in\mathbb{N}}iax(i)} =\frac{b}{a}
\]
and, since $\mathsf{L}(x)$ contains the interval $I$, of length $|I|=1+\sum_{i\in\mathbb{N}} x(i)k(i)$,
\[
\ld(x)
\ge \frac{\sum_{i\in\mathbb{N}} x(i)k(i)}{\big(|x'|+\sum_{i\in\mathbb{N}}ibx(i)\big) - \big(|x'|+\sum_{i\in\mathbb{N}}iax(i)\big)}
\ge \frac{\sum_{i\in\mathbb{N}} x(i)ic(b-a)}{(b-a)\sum_{i\in\mathbb{N}}ix(i)}= c.
\]

Since $x\in M$ was arbitrary, this proves that $\rho(M)\le \frac{b}{a}$ and $\ld(M)\ge c$.  This establishes the desired result when combined with the opposite inequalities, established previously.\end{proof}


We begin to explore the question of when a monoid $M$ has accepted length density.  We first consider monoids with accepted length density equal to the left hand side of inequality~\eqref{fund2}.  Before proceeding, we will need some definitions.  
Suppose that $M$ is a commutative cancellative atomic BF-monoid.  Without loss for purposes of considering length sets, we assume that $M$ is reduced (i.e., has a unique unit).  Let $\mathcal{Z}(M)$ be the free abelian monoid on the atoms of $M$, which is called the \textit{factorization monoid of} $M$.  There is a natural map $\pi: \mathcal{Z}(M) \rightarrow M$ that sends a factorization on to its relevant element in $M$.    If $m \in M$, then set 
$
\mathsf Z_M(m)=\pi^{-1}(m)
$
which is known as the set of factorizations of $m$ in $M$.  Two factorizations $z,y\in \mathsf{Z}_M(m)$ can be written in terms of atoms as 
$z=u_1\cdots u_kv_1\cdots v_\ell$ and $y=u_1\cdots u_kw_1\cdots w_n$,
where $\{v_1,\ldots ,v_\ell\}\cap \{w_1,\ldots ,w_n\}=\emptyset$.  
Set $\gcd (z,y) = u_1\cdots u_k\in \mathcal{Z}(M)$,
and the distance between $z$ and $y$ to be $\dis (z,y)=\max\{n,\ell\}\in \mathbb{N}_0$.  
For each nonunit $m\in M$, the \emph{factorization graph}~$\nabla_m$ has vertex set $\mathsf{Z}_M(m)$, and two vertices $z,y \in \mathsf{Z}_M(m)$ share an edge if $\gcd(z,y)\neq 0$. If~$\nabla_m$ is not connected, then $m$ is called a Betti element of $M$. Write 
\[
\mathrm{Betti}(M) = \{b \in M^{LI} \mid \mbox{ if  }\nabla_b \mbox{  is disconnected}\}
\]
for the set of Betti elements of $M$.

\begin{proposition}\label{t:fingentastytest}
If $M$ has accepted length density, then $\ld(M) = 1/\max\Delta(M)$ if and only if $\ld(b) = 1/\max\Delta(M)$ for some $b \in \Betti(M)$.  
\end{proposition}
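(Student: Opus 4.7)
The plan splits into the two directions. For the ``if'' direction, set $d=\max\Delta(M)$ and suppose $\ld(b)=1/d$ for some $b\in\Betti(M)$. Then $\ld(M)\le\ld(b)=1/d$ by the definition of $\ld(M)$ as an infimum, while Proposition~\ref{basicbounds} gives $\ld(M)\ge 1/\sup\Delta(M)=1/d$, so equality holds. Note that this direction does not actually require $b$ to be Betti.

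For the harder ``only if'' direction, assume the accepted length density $\ld(M)=1/d$ is realized by some $x\in M^{LI}$. Since $\Delta(x)\subseteq\Delta(M)$ forces $\max\Delta(x)\le d$, the double inequality \eqref{fund1} combined with $\ld(x)=1/d$ forces $|\Delta(x)|=1$, hence $\Delta(x)=\{d\}$ and $\mathsf{L}(x)$ is an arithmetic progression with common difference $d$. The strategy is to locate the desired Betti element as a distinguished divisor of $x$: using that $M$ is a BF-monoid (so the length function strictly decreases along proper divisors and chains of divisors in $M^{LI}$ terminate), I would pick a divisibility-minimal element $y\mid x$ with $y\in M^{LI}$. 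The first key observation is that every such divisor satisfies $\Delta(y)=\{d\}$, since concatenating two arbitrary factorizations of $y$ with a common factorization of $x/y$ embeds $\mathsf{L}(y)$ into a translate of $\mathsf{L}(x)$, showing $\mathsf{L}(y)$ lies in a single residue class modulo $d$; combined with $\max\Delta(y)\le d$, this gives $\Delta(y)\subseteq\{d\}$, nonempty precisely because $y\in M^{LI}$.

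The main obstacle is then to argue that this minimal $y$ is actually Betti. I would proceed by contradiction: assume $\nabla_y$ is connected, and pick $u,v\in\mathsf{Z}_M(y)$ with $|v|-|u|=d$. Any path $u=w_0,w_1,\ldots,w_m=v$ in $\nabla_y$ satisfies $\sum_{i}(|w_{i+1}|-|w_i|)=d$, so some index $i$ has $|w_{i+1}|\ne|w_i|$; by the residue-class observation applied to $y$, this nonzero difference is in fact a nonzero multiple of $d$. Since consecutive vertices of $\nabla_y$ share an atom, $g_i=\gcd(w_i,w_{i+1})$ is nontrivial, and $y'=y/\pi(g_i)$ is a \emph{proper} divisor of $y$ admitting the two factorizations $w_i/g_i$ and $w_{i+1}/g_i$ of distinct lengths. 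Thus $y'\in M^{LI}$ properly divides $y$, contradicting the divisibility-minimality of $y$. Hence $y\in\Betti(M)$, and since $\Delta(y)=\{d\}$ we conclude $\ld(y)=1/d=1/\max\Delta(M)$ as required.
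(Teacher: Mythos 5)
Your proof is correct and follows essentially the same route as the paper's: both directions rest on the bounds of Proposition~\ref{basicbounds}, and the forward direction locates a Betti divisor $b$ of an element $x$ realizing $\ld(M)=1/\max\Delta(M)$ and transfers $\Delta(x)=\{\max\Delta(M)\}$ to $\Delta(b)=\{\max\Delta(M)\}$ via the translate $\mathsf{L}(b)+c\subseteq\mathsf{L}(x)$ together with $\max\Delta(b)\le\max\Delta(M)$. The only difference is that you explicitly prove the existence of a Betti divisor with nontrivial length set (via a divisibility-minimal $y\in M^{LI}$ dividing $x$ and the connectivity argument on $\nabla_y$), a standard fact the paper simply invokes.
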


\begin{proof}
By the hypothesis, $|\Delta(M)| <\infty$, so let $\delta = \max\Delta(M)$.  The backward implication follows immediately from the bound $\ld(M) \ge 1/\delta$.  Conversely, suppose $\ld(M) = 1/\delta$.  By~assumption, there exists some $m \in M$ such that $\ld(m) = 1/\delta$.  Since $|\mathsf L(m)| \ge 2$, there exists $b \in \Betti(M)$ dividing $m$ in $M$ with $|\mathsf L(b)| \ge 2$, meaning $\mathsf L(b) + c \subseteq \mathsf L(m)$ for some $c \in \ZZ_{\ge 0}$.  In particular, $\Delta(m)$~must have some element at most $\max\Delta(b)$.  Since $\ld(m) = 1/\delta$, we see $\Delta(m) = \{\delta\}$, so we conclude $\Delta(b) = \{\delta\}$ as well.  
\end{proof}

\begin{example}\label{e:notatbetti}
Length density need not be attained at a Betti element.  Indeed, the numerical semigroup $M = \<20, 28, 42, 73\>$ has $\Betti(M) = \{84, 140, 146\}$ with length sets $\{2, 3\}$, $\{4, 5, 7\}$, and $\{2, 4, 5\}$, respectively, but $\mathsf L(202) = \{4, 6, 7, 9\}$ yields the length density $\tfrac{3}{5} < \tfrac{2}{3}$.  
\end{example}

\begin{theorem}\label{t:fingenaccepted}
If $M$ is finitely generated, then $\ld(M)$ is accepted.  
\end{theorem}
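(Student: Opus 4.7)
The strategy combines three ingredients: (i) $\Delta(M)$ is finite, (ii) the Structure Theorem for sets of lengths in finitely generated monoids yields finitely many AAP ``patterns,'' and (iii) within each pattern, length density is a monotone rational function of a single parameter. Since $M$ is finitely generated, $\Delta(M)$ is finite, so Proposition~\ref{basicbounds} gives $\ld(M) \ge 1/\max\Delta(M) > 0$; the task is to promote this infimum to a minimum by exhibiting a witnessing element.

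I would invoke the Structure Theorem \cite[Ch.~4]{GHKb}: there exist a finite set $\mathcal{D}^{\ast}(M) \subseteq \Delta(M)$ and constants $M_0, N_0 \in \mathbb{N}$ such that every $x \in M^{LI}$ with $|\mathsf{L}(x)| > N_0$ has $\mathsf{L}(x) = y + (L' \cup L^{\ast} \cup L'')$ an AAP with difference $d \in \mathcal{D}^{\ast}(M)$, bound $M_0$, and some central length $\ell_x$; here $L^{\ast} = d\mathbb{Z} \cap [0, \ell_x d]$, $L' \subseteq [-M_0,-1]$, and $L'' \subseteq [\ell_x d + 1, \ell_x d + M_0]$. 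Because the end-pieces are drawn from a fixed finite family, the ``long'' length sets split into finitely many one-parameter families indexed by $\ell$, each with a fixed pattern $\tau = (d, L', L'')$. Setting $a = -\min L'$ and $b = \max L'' - \ell_x d$ (each interpreted as $0$ if the corresponding piece is empty), the density along a pattern $\tau$ is the monotone rational function
\[
f_\tau(\ell) = \frac{|L'| + \ell + |L''|}{\ell d + a + b},
\]
which converges to $1/d$ as $\ell \to \infty$. Hence $\inf_\ell f_\tau(\ell)$ is either attained at the smallest admissible $\ell$ (and thus by a concrete element of $M$) or equals the limit $1/d$.

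For the ``short'' elements with $|\mathsf{L}(x)| \le N_0$, the denominator $L(x)-\ell(x)$ is a sum of at most $N_0 - 1$ members of the finite set $\Delta(M)$, so only finitely many distinct values of $\ld(x)$ arise in this range. Combining this with the per-pattern analysis, the set $\{\ld(x) : x \in M^{LI}\}$ decomposes as a union of finitely many attained values together with at most finitely many accumulation points drawn from $\{1/d : d \in \mathcal{D}^{\ast}(M)\}$; consequently $\ld(M)$ is attained unless it coincides with some $1/d$ that is not itself realized in any pattern.

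The main obstacle is ruling out this last possibility. The plan is to show that whenever $\ld(M) = 1/d$ with $d \in \mathcal{D}^{\ast}(M)$, some element $x$ of $M$ has length set a pure arithmetic progression of common difference $d$ (equivalently, $L' = L'' = \emptyset$ in some pattern), for which $\ld(x) = 1/d$ exactly. Such an element should be constructible from a minimal-$\ell$ representative of any pattern with difference $d$, either by passing to a sufficiently high power (in the spirit of Lemma~\ref{directsumlemma} and the mediant argument used there) or by multiplying by an auxiliary element whose factorization length is unique so as to absorb the end-piece contributions. This final construction closes the argument and produces the required witness realizing $\ld(M)$.
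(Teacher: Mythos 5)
Your overall strategy---classify length sets by structure-theorem patterns, observe that the density along each pattern converges to a rational limit, and then realize any non-attained limit by an explicit element---is reasonable in outline, but it has two genuine problems. First, you are quoting the wrong structure theorem. For finitely generated monoids the length sets are almost arithmetical \emph{multi}progressions: the middle part is $(\mathcal{D}+d\mathbb{Z})\cap[0,\ell d]$ for a period $\mathcal{D}\subseteq[0,d]$ that may contain more than the two endpoints, not the single progression $d\mathbb{Z}\cap[0,\ell d]$ of an AAP. The paper's own Example~\ref{e:fingendescreasingsequence} defeats your decomposition: there the length sets $\mathsf L_T((28n,3,3))$ have alternating gaps $1,2,1,2,\ldots$ of unbounded total span, which is not an AAP with uniformly bounded end pieces for any difference $d$, and the corresponding densities decrease strictly to a limit that is not of the form $1/d$. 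Consequently your claim that the only possible accumulation points of $\{\ld(x)\}$ lie in $\{1/d : d\in\mathcal{D}^{\ast}(M)\}$ is false (with multiprogressions they have the form $k/d$), and your plan to realize each such limit by an element whose length set is a pure arithmetic progression of difference $d$ does not address the limits that actually occur.

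Second, and independently, the step you yourself flag as ``the main obstacle'' is exactly where all the work lies, and you have not done it: you offer two candidate constructions (passing to high powers, or multiplying by an auxiliary element to absorb the end pieces) without carrying either out, and neither is obviously workable---high powers of $x$ have asymptotic density $1/\min\Delta(\llbracket x\rrbracket)$ by Theorem~\ref{asymptotic}, which again need not be attained and need not equal the target limit. The paper supplies the missing mechanism in a different way: assuming a strictly decreasing sequence $\ld(m_i)$, it isolates a single gap size $\delta$ with $\ld(m_i)>1/\delta$ occurring arbitrarily often, uses the finiteness of trades of bounded length difference in a finitely generated monoid to extract one fixed element $b$ with $b^i\mid m_i$, and then either $\min\Delta(b)\ge\delta$ gives $\ld(b)\le 1/\delta<\ld(m_i)$ directly, or $\min\Delta(b)<\delta$ and the structure theorem applied to $\mathsf L(b^i)$ bounds the number of $\delta$-gaps, a contradiction. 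Some argument of this kind---tying the repeated gap to a fixed divisor---is needed to close your proof; as written the proposal is incomplete.
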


\begin{proof}
Suppose $m_1, m_2, \ldots \in M$ are distinct and that $\ld(m_i) > \ld(m_{i+1})$ for every $i$.  Since $M$ is finitely generated, there are only finitely many elements with any fixed factorization length, so by choosing an appropriate subsequence, it suffices to assume $|\mathsf L(m_i)| < |\mathsf L(m_{i+1})|$ for each~$i$.  

Now, in order for $\ld(m_i)$ to be a strictly decreasing sequence, there must exist a gap size~$\delta$ between sequential elements of $\mathsf L(m_i)$ satisfying $\ld(m_i) > 1/\delta$ that occurs arbitrarily many times in $\mathsf L(m_i)$ for $i$ large.  Since $M$ is finitely generated, the set of all trades with length difference at most $\delta$ is finite by \cite[Theorem~4.9]{factorhilbert}, so by omitting elements from the sequence $m_i$ as needed, we can assume there is some trade $a \sim b$ with $|b| - |a| = \delta$ between factorizations of some element $b \in M$ with disjoint support that can be performed at least $i$ times in $\mathsf Z(m_i)$ between factorizations of sequential length in $\mathsf L(m_i)$.  This in particular implies $b^i \mid m_i$ for all~$i$.  

If $\min \Delta(b) \ge \delta$, then we are done since $\ld(b) \le 1/\delta < \ld(m_i)$ for all $i$.  Otherwise, $\min \Delta(b) < \delta$, so the structure theorem for sets of length \cite[Theorem 8.1]{G2} implies there is a bound (independent of $i$) on the number of times $\delta$ occurs as a gap size between sequential elements of $\mathsf L(b^i)$, which is a contradiction since $b^i \mid m_i$.  
\end{proof}

\begin{example}\label{e:fingendescreasingsequence}
Infinite decreasing sequences of length density are possible in finitely generated semigroups, even though their length densities
are accepted.  To see this, consider the numerical semigroup $M = \<4,7\>$, so 
$$\mathsf L_S(28n) = \{ 4n, 4n+3, \ldots, 7n \}.$$
Then, take $T = \<(4,0,0), (7,0,0), (0,3,0), (0,1,1), (0,0,3)\>$, so 
$$\mathsf L_T( (28n, 3, 3) ) = \mathsf L_S(28n) + \{2,3\}$$
has gap sequence $1, 2, 1, 2, \ldots, 1, 2, 1$, producing a length density sequence
$$\ld_T( (28n, 3, 3) ) = (3n + 1)/(4n + 1)$$
that is strictly decreasing.  
\end{example}

\begin{example} 
We consider some examples related to Theorem \ref{t:fingenaccepted}.
\begin{enumerate}
\item If $G$ is a finite abelian group and $S$ a nonempty subset of $G$, then the block monoid $\mathcal{B}(G,S)$ is finitely generated and hence has accepted length density. 
For a monoid $M$, set $\mathcal{L}(M)=\{\mathsf{L}(x)\,\mid\, x\in M^{LI}\}$.  If $M$ is a Krull monoid with divisor class group $G$ and distribution of primes $S$, then by \cite[Proposition 12]{Ger1} we have that $\mathcal{L}(M)=\mathcal{L}(\mathcal{B}(G,S))$.  Hence, if $G$ is finite abelian, then $M$ has accepted length density.  This class includes all Krull domains with finite divisor class group, which includes the ring of algebraic integers in a finite extension of the rationals.
\item Since numerical monoids are finitely generated, they also have accepted length density.  While the computation of the elasticity of a numerical monoid $M$ is relatively simple \cite[Theorem 2.1]{CHM}, the computation of its length density is a more complex calcuation and we defer for the time being an extended study of this question.
\item An atomic integral domain $D$ is a \textit{Cohen-Kaplansky domain} (or a \textit{CK-domain}) if it has finitely many nonassociated irreducible elements.  In \cite[Theorem 4.3]{AM}, the authors give 14 conditions equivalent to $D$ being a CK-domain; the most notatable among these being $D$ is a one-dimensional semilocal domain such that for each nonprincipal maximal ideal $M$ of $D$, $D/M$ is finite and $D$ is analytically irreducible. As $\ld (px)=\ld (x)$ for any prime element $p\in D$ and nonzero nonunit 
$x\in D^{LI}$, Theorem \ref{t:fingenaccepted} implies that a CK-domain has nonzero accepted length density.
\end{enumerate}
\end{example}

We briefly approach the question of computing $\ld (\mathcal{B}(G))$ and start with a known result concerning the delta set of such a block monoid.  If $G=\sum_{i=1}^k \mathbb{Z}_{n_i}$ is a finite abelian group where $n_i |n_{i+1}$ for $1\leq i<k$ with $|G|\geq 3$, then by \cite[Corollary 2.3.5]{G1}
\begin{equation}\label{alfie}
[1,n_k-2]\subseteq \Delta(\mathcal{B}(G))\subseteq [1, \mathsf{c}(\block (G))-2]\subseteq[1, \mathsf{D}(G)-2].
\end{equation}
Here $\mathsf{D}(G)$ represents the \textit{Davenport Constant} of $G$; this is the longest length of a nonzero sequence which sums to $0$, but has no proper subsum that sums to zero.  The quantity $\mathsf{c}(M)$ is the \textit{catenary degree} of the monoid $M$, which we define generally as follows.   If $a\in M$ and $z_0, z_1,\ldots ,z_k\in \mathsf{Z}_M(a)$, then 
$z_0, z_1,\ldots ,z_k$ is called a \textit{chain from} $z_0$ to $z_k$.  For $N\in \mathbb{N}$,  $z_0, z_1,\ldots ,z_k$ is called an $N$-chain if $\dis (z_i,z_{i+1})\leq N$ for $0\leq i\leq k-1$.  The catenary degree, denoted $\mathsf{c}(a)$ of $a\in M$ is the smallest $N\in\mathbb{N}_0\cup\infty$ such that any two factorizations $z,y$ in $\mathsf{Z}_M(a)$ can be linked by an $N$-chain.  We then set $\mathsf{c}(M)=\sup\{\mathsf{c}(a)\,\mid\, a\in M\}$.  Equation \eqref{alfie} immediately leads to the following.

\begin{proposition}
  If $G$ is a finite abelian group with $|G|\geq 3$, then
 \[
  \frac{1}{\mathsf{c}(\block(G))-2}\leq \ld (\block (G)) \leq 1.
 \]
  \end{proposition}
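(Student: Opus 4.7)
The plan is to apply Proposition~\ref{basicbounds} to the monoid $M = \block(G)$ and then substitute the containments provided by equation~\eqref{alfie}. Proposition~\ref{basicbounds} gives
\[
\frac{1}{\sup \Delta(\block(G))} \leq \ld(\block(G)) \leq \frac{1}{\min \Delta(\block(G))},
\]
so the proof reduces to reading off bounds on $\min \Delta(\block(G))$ and $\sup \Delta(\block(G))$ from~\eqref{alfie} and plugging them in.

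For the upper bound $\ld(\block(G)) \leq 1$, I would observe that $\Delta(\block(G)) \subseteq \mathbb{N}$ forces $\min \Delta(\block(G)) \geq 1$, so the right-hand inequality from Proposition~\ref{basicbounds} delivers $\ld(\block(G)) \leq 1$. The hypothesis $|G| \geq 3$ is used here only to guarantee nonemptiness of $\Delta(\block(G))$: by Carlitz's theorem $\block(G)$ fails to be half-factorial, so $\block(G)^{LI}$ is nonempty and the length density is well defined. (Equivalently, the general remark right after Definitions~\ref{basic} that $\ld(M) \in [0,1]$ yields the same conclusion with no further work.)

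For the lower bound, equation~\eqref{alfie} asserts $\Delta(\block(G)) \subseteq [1, \mathsf{c}(\block(G)) - 2]$, so $\sup \Delta(\block(G)) \leq \mathsf{c}(\block(G)) - 2$. Inserting this into the left half of the inequality from Proposition~\ref{basicbounds} yields
\[
\ld(\block(G)) \geq \frac{1}{\sup \Delta(\block(G))} \geq \frac{1}{\mathsf{c}(\block(G)) - 2},
\]
which is exactly the desired bound. There is no genuine obstacle here: the result is an assembly of two prior statements, which is precisely why the authors remark that~\eqref{alfie} ``immediately leads to'' the proposition. The only detail worth verifying is that $\mathsf{c}(\block(G)) - 2 \geq 1$ so that the denominator is positive and the bound is meaningful; this is again guaranteed by $|G| \geq 3$ via the same non-half-factoriality observation.
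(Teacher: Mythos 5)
Your proof is correct and is exactly the argument the paper intends: the authors give no separate proof, stating only that equation~\eqref{alfie} combined with Proposition~\ref{basicbounds} ``immediately leads to'' the result, which is precisely the assembly you carry out. Your added remarks on why $|G|\geq 3$ guarantees $\Delta(\block(G))\neq\emptyset$ (so that both the length density and the denominator $\mathsf{c}(\block(G))-2$ are meaningful) are a reasonable and harmless elaboration of details the paper leaves implicit.
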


It is known for each finite abelian group $G$ that $\Delta(\block (G))$ is a complete interval (i.e., $\Delta(\block (G))=\{1,2,\ldots , \max\Delta(\block (G))\}$) (see~\cite{GY1}). 
The containments 
\[
\Delta(\mathcal{B}(G))\subseteq [1, \mathsf{c}(\block (G))-2]\subseteq[1, \mathsf{D}(G)-2]
\]
form an equality if and only if $G$ is cyclic or an elementary 2-group \cite[Theorem~A]{GZ1}. 
Thus, in either of these two cases we have 
$
\Delta(\block (G))=\{1,2,\ldots, \mathsf{D}(G)-2\}$.
For a cyclic group, this yields that $\Delta(\block(\mathbb{Z}_n))=\{1,2,\ldots ,n-2\}$, and for an elementary 2-group it yields that 
$\Delta(\block (\sum_{i=1}^k \mathbb{Z}_2)) = \{1,2,\ldots , k-1\}$ (using the known formula for $\mathsf{D}(\sum_{i=1}^k \mathbb{Z}_2)=k+1$).  Combining this with Proposition \ref{basicbounds}, we obtain that
\[
\frac{1}{n-2}\leq \ld (\block (\mathbb{Z}_n)) \leq 1\mbox{  and  }
\frac{1}{k-1}\leq \ld (\block (\sum_{i=1}^k \mathbb{Z}_2))\leq 1.\]
By \cite[Corollary 2.3.6]{G1}, if $G$ is either cylic or an elementary 2-group, then some block $B\in\mathcal{B}(G)$ has $\mathsf{L}(B)=\{2,\mathsf{D}(G)\}$.  Hence,
in both cases
$\ld (B)=\frac{1}{\mathsf{D}(G)-2}$ which yields the~following.

\begin{proposition}\label{thisone}
If $G=\mathbb{Z}_n$ is cyclic, then $\ld (\block(\mathbb{Z}_n)) = \frac{1}{n-2}$, and if $G=\sum_{i=1}^k \mathbb{Z}_2$, then $\ld (\block (\sum_{i=1}^k \mathbb{Z}_2))= \frac{1}{k-1}$.
\end{proposition}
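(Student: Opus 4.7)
The plan is to combine the two inputs just recalled in the paragraph immediately preceding the statement: on one hand the lower bound from Proposition~\ref{basicbounds}, and on the other hand the existence of a block $B$ with the prescribed two-element length set. Since the delta set computations $\Delta(\mathcal{B}(\mathbb{Z}_n))=\{1,2,\dots,n-2\}$ and $\Delta(\mathcal{B}(\sum_{i=1}^k \mathbb{Z}_2))=\{1,2,\dots,k-1\}$ have already been established, and Proposition~\ref{basicbounds} gives $\ld(M)\ge 1/\max\Delta(M)$, both lower bounds are immediate: $\ld(\mathcal{B}(\mathbb{Z}_n))\ge \frac{1}{n-2}$ and $\ld(\mathcal{B}(\sum_{i=1}^k \mathbb{Z}_2))\ge \frac{1}{k-1}$.

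For the matching upper bounds, I would exhibit an element of $M^{LI}$ whose length density equals the bound. By \cite[Corollary~2.3.6]{G1} (cited in the paragraph above the statement), for $G$ cyclic or an elementary $2$-group there is a block $B\in\mathcal{B}(G)$ with $\mathsf L(B)=\{2,\mathsf D(G)\}$. In the cyclic case $\mathsf D(\mathbb{Z}_n)=n$, so $|\mathsf L(B)|-1=1$ and $\mathsf L^\Delta(B)=n-2$, giving $\ld(B)=\tfrac{1}{n-2}$. In the elementary $2$-group case $\mathsf D(\sum_{i=1}^k \mathbb{Z}_2)=k+1$, so $|\mathsf L(B)|-1=1$ and $\mathsf L^\Delta(B)=k-1$, giving $\ld(B)=\tfrac{1}{k-1}$. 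In each case $B\in M^{LI}$ (since $|\mathsf L(B)|\ge 2$), so the infimum defining $\ld(M)$ is at most $\ld(B)$, producing the opposite inequality.

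Combining the two bounds yields the stated equalities; moreover, because the infimum is attained at the block $B$, the length density of $\mathcal{B}(G)$ is accepted in both cases, so no additional argument is needed. There is no real obstacle here: the substantive facts (the explicit form of $\Delta(\mathcal{B}(G))$ and the existence of a block with length set $\{2,\mathsf D(G)\}$) are exactly the two results invoked just before the statement, and the proof is a one-line arithmetic verification once they are quoted.
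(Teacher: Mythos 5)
Your proposal is correct and follows essentially the same route as the paper: the lower bound comes from Proposition~\ref{basicbounds} applied to $\Delta(\block(G))=\{1,\dots,\mathsf{D}(G)-2\}$, and the matching upper bound comes from the block $B$ with $\mathsf{L}(B)=\{2,\mathsf{D}(G)\}$ guaranteed by \cite[Corollary~2.3.6]{G1}. The only (implicit) caveat, present in the paper's setup as well, is that one needs $|G|\ge 3$ so that these delta-set formulas apply and $\block(G)$ is not half-factorial.
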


We list an application of Proposition \ref{thisone} to algebraic rings of integers.

\begin{corollary}
Any ring $R$ of algebraic integers with prime class number $p$ has 
$\ld(R) = \frac{1}{p-2}$.
\end{corollary}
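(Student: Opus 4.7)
The plan is to reduce the computation for $R$ to the computation for the block monoid $\mathcal{B}(\mathbb{Z}_p)$, and then invoke Proposition~\ref{thisone}. Since $p$ is prime (and implicitly $p \geq 3$, as $p=2$ yields a half-factorial domain with empty length ideal by Carlitz's theorem), the ideal class group of $R$ is the cyclic group $\mathbb{Z}_p$.

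First, I would observe that $R$ is a Krull monoid (in fact a Dedekind domain), and that in a ring of algebraic integers every ideal class contains infinitely many prime ideals. Hence the distribution of primes $S$ equals all of $G = \mathbb{Z}_p$, so the associated restricted block monoid is $\mathcal{B}(\mathbb{Z}_p, \mathbb{Z}_p) = \mathcal{B}(\mathbb{Z}_p)$. Next, I would apply the length-set transfer result cited earlier in the paper (\cite[Proposition~12]{Ger1}), which gives
\[
\mathcal{L}(R) = \mathcal{L}(\mathcal{B}(\mathbb{Z}_p)).
\]

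Because $\ld(x)$ is defined purely in terms of $\mathsf{L}(x)$, this equality of systems of length sets forces a bijection between the multisets $\{\ld(x) : x \in R^{LI}\}$ and $\{\ld(B) : B \in \mathcal{B}(\mathbb{Z}_p)^{LI}\}$. Taking infima yields $\ld(R) = \ld(\mathcal{B}(\mathbb{Z}_p))$, and Proposition~\ref{thisone} gives $\ld(\mathcal{B}(\mathbb{Z}_p)) = \tfrac{1}{p-2}$, completing the proof.

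The argument is essentially a translation: no new combinatorial work is needed beyond what is already packaged in Proposition~\ref{thisone}. The only mild subtlety is confirming that the transfer $\mathcal{L}(R) = \mathcal{L}(\mathcal{B}(G))$ is legitimate, which reduces to the standard fact that each ideal class in a ring of algebraic integers contains a prime divisor (indeed, by Chebotarev or by the classical theorem that each class contains infinitely many prime ideals). No obstacle should arise in practice; the entire corollary is a one-line deduction from the preceding proposition once the class group is identified as $\mathbb{Z}_p$.
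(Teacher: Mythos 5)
Your proof is correct and follows the same route the paper intends: identify the class group as $\mathbb{Z}_p$, use the fact that every ideal class of a ring of algebraic integers contains a prime divisor to transfer length sets to $\mathcal{B}(\mathbb{Z}_p)$ via \cite[Proposition~12]{Ger1}, and apply Proposition~\ref{thisone}. The paper states this corollary as an immediate application of Proposition~\ref{thisone} without further argument, so your write-up simply makes the implicit transfer step explicit.
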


In closing this section, we note that progress on the computation of further values of $\ld (\block (G))$ depends on improved computations of $\mathsf{c}(\block (G))$.  Outside of the groups 
listed in Proposition \ref{thisone}, $\mathrm{c}(\block (G))$ is known for only a handful of groups (see \cite[Theorems~A and~1.1]{GZ1}).  Hence, we leave an extended discussion of this problem to future consideration.


\section{Asymptotic Length Density}
\label{sec:asymptotic}

We open this section by constructing an atomic monoid with an element that lacks asymptotic length density.

\begin{example}\label{noasym}
Consider the Puiseux monoid  \[M=\left\langle \frac{4}{3}, \frac{8}{5},\frac{800}{1201},\frac{a_1}{p_1},\frac{a_2}{p_2},\ldots\right\rangle.\]  The $p_i$ are a strictly increasing sequence of primes, to be specified later.  The $a_i$ are a strictly increasing sequence of natural numbers, to be specified later.  Because the $a_i$ are distinct, all of the atoms of $M$ are unstable, so by \cite[Theorem 4.8]{FO}, $M$ is a BF-monoid (and an FF-monoid).

Our focus is on $x=8$, and we calculate $x^n=8n$, as $n$ grows large.
For $n<100$, $x^n<800$.  Hence, all factorizations of $x^n$ will include only the first two atoms. Note that $8=6\cdot \frac{4}{3}=5\cdot \frac{8}{5}$, so $\mathsf{L}(x^n)=\{5,6\}^n=[5n,6n]$.  In particular $\ld(x^n)=1$.

At $n=100$, $x^n=800$, and we get the new factorization $800=600\cdot \frac{4}{3}=500\cdot \frac{8}{5}=1201 \cdot \frac{800}{1201}$.  Note that $1201$ was chosen to be the smallest prime greater than twice $600$.  Hence we have $\mathsf{L}(x^{100})=[500,600]\cup \{1201\}$ so  $\ld(x^{100})<\frac{1}{2}$. 

As $n$ continues to increase, so long as $8n<a_1$, all factorizations of $x^n$ will include only the first three atoms.  Note that we have the trade $600\cdot \frac{4}{3}=500\cdot \frac{8}{5}=1201 \cdot \frac{800}{1201}$.  We take $n=100k$ and  calculate the length set of $x^{100k}$ as the union of intervals. We have 
\[
\mathsf{L}(x^{100k})=[500k,600k] \cup [500k+701,600k+601] \cup [500k+2\cdot 701,600k+2\cdot 601] \cup \cdots\, .
\]
The last interval will be $[500k+k \cdot 701,600k+k \cdot 601]=\{1201k\}$.
Note that if $600k\ge (500k+701)$, i.e. $100k\ge 701$, then the first two intervals overlap.  If $600k+601>500k+2\cdot 701$, i.e $100k\ge 801$, then the first three intervals overlap.  If $600k+i\cdot 601 > 500k+(i+1)\cdot 701$, i.e. $100k>701+100i$, then the $i$-th interval overlaps with the $i+1$-th interval.  Taking $i=\frac{3}{4}k$, if $k\ge 29>\frac{701}{25}$, we have the $i$-th interval overlapping with the $i+1$-th interval.  In particular, $\mathsf{L}(x)$ will contain the interval $[500k,600k+\frac{3}{4}k\cdot 601]$, i.e. $[500k, 1050.75k]$.  Hence $\ld(x^{2900})\ge \frac{550.75\cdot 29-1}{701\cdot 29}>\frac{3}{4}$.

We are now ready to choose the next atom.  Set $a_1=2901\cdot 8$, and $p_1>2\cdot 1201\cdot 30$, e.g. $p_1=72073$.  Using only the first three atoms, all factorizations of $x^{2901}$ are of length at most $30\cdot 1201$.  Using the new fourth atom, we get a new factorization of length $p_1$, which gives a gap of length at least $30\cdot 1201$.  Hence $\ld(x^{2901})<\frac{1}{2}$.

Continuing in this way, we find $\ld(x^n)$ can be made to grow to be above $\frac{3}{4}$, then to shrink  below $\frac{1}{2}$, over and over as $n\to \infty$.  Hence the asymptotic length density of $x$ does not exist.
\end{example}

What atomic monoids admit asymptotic length densities for all their elements?  While we do not completely answer this, we offer a large class that does.
We again require some definitions.  If $M$ is a monoid and $x\in M$, then let $\llbracket x\rrbracket$ denote the set of all elements in $M$ that divide $x^k$ for some $k\in\mathbb{N}$.
For $a\in M$ and $x \in \mathcal{Z}(M)$, let  $\mathsf{t}(a, x)\in \mathbb{N}_0-\{1\}$ denote the
smallest $N\in \mathbb{N}_0-\{1\}$ with the following property:
If $\mathsf{Z}_M(a) \cap x\mathcal{Z}(M) \neq \emptyset$  and $z \in \mathsf{Z}_M(a)$, then there exists some factorization
$z^\prime \in \mathsf{Z}_M(a) \cap  x\mathcal{Z}(M)$ such that $\dis (z, z^\prime) \leq N$.  If $\mathsf{Z}_M(a) \cap x\mathcal{Z}(M) = \emptyset$, then $\mathsf{t}(a,x)=0$.
We call $\mathsf{t}(a,x)$ the \textit{tame degree} of $a$ with respect to $x$.  If $u\in M$, then set $\mathsf{t}(M,u) =\sup\{\mathsf{t}(x,u) \mid x\in M\}$.  The monoid
$M$ is \textit{locally tame} if $\mathsf{t}(M,u)<\infty$ for each atom $u$ of $M$.  By \cite[Theorem 1.6.7]{GHKb}, if $M$ is an atomic locally tame monoid, then $M$ is a BF-monoid.
The \textit{tame degree} of $M$, is defined by $\mathsf{t}(M)=\sup\{\mathsf{t}(M,u) \mid u\in \mathcal{A}(M)\}$.  If $\mathsf{t}(M)<\infty$, then $M$ is called \textit{globally tame}.  
Since $\mathsf{c} (H) \le \mathsf{t} (H)$ by \cite[Theorem 1.6.6]{GHKb}, global tameness implies finiteness of the catenary degree.

\begin{theorem}\label{asymptotic}
Let $S$ be a locally tame atomic monoid and for $x\in S$ set $H=\llbracket x\rrbracket$.  Assume for $x\in S$ that
$\Delta(x)\neq \emptyset$ and $|\Delta(H)| <\infty$.  Let $d=\min \Delta(H)$, $\tau$ be minimal such that $d\in \Delta(x^\tau)$, $\psi=\max(\tau,\rho(\Delta(H))-1)$, and $T=\mathsf{t}(H,\mathsf{Z}_S(x^\psi))$.  
For all $n\ge \psi$ it follows that $\frac{1}{d}-\frac{2T}{nd^2} \le \ld(x^n) \le \frac{1}{d}$.  In particular, $\overline{\ld}(x)=1/d$.
\end{theorem}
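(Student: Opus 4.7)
The plan is to establish the two bounds $\ld(x^n) \le 1/d$ and $\ld(x^n) \ge 1/d - 2T/(nd^2)$ separately, then pass to the limit. The upper bound is immediate from Proposition~\ref{basicbounds}: since $H = \llbracket x \rrbracket$ is divisor-closed in $S$, every factorization of $x^n$ lies inside $H$, so $\Delta(x^n) \subseteq \Delta(H)$ and $\min \Delta(x^n) \ge d$; hence $\ld(x^n) \le 1/\min\Delta(x^n) \le 1/d$. For the lower bound I would reduce to the two auxiliary inequalities
\[
L(x^n) - \ell(x^n) \ge nd
\qquad\text{and}\qquad
|\mathsf L(x^n)| - 1 \ge \frac{L(x^n) - \ell(x^n)}{d} - \frac{2T}{d},
\]
which together give $\ld(x^n) \ge 1/d - (2T/d)/(L(x^n) - \ell(x^n)) \ge 1/d - 2T/(nd^2)$.

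The first auxiliary inequality is elementary: $\Delta(x) \ne \emptyset$ forces $L(x) - \ell(x) \ge \min \Delta(x) \ge d$, and concatenating min- and max-length factorizations of $x$ yields $\ell(x^n) \le n\ell(x)$ and $L(x^n) \ge n L(x)$, so $L(x^n) - \ell(x^n) \ge n(L(x) - \ell(x)) \ge nd$. The second inequality is equivalent to saying that the total excess over $d$ of the gaps of $\mathsf L(x^n)$ is at most $2T$, and carries the real work. I would first verify $d \in \Delta(x^\psi)$: because $\psi \ge \tau$, the inclusion $\mathsf L(x^\tau) + \mathsf L(x^{\psi-\tau}) \subseteq \mathsf L(x^\psi)$ produces two lengths of $x^\psi$ differing by exactly $d$, and any length strictly between them would create a gap in $\Delta(x^\psi) \subseteq \Delta(H)$ strictly smaller than $\min \Delta(H) = d$, a contradiction. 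Thus there exist $u_-, u_+ \in \mathsf Z(x^\psi)$ of consecutive lengths in $\mathsf L(x^\psi)$ differing by $d$.

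Next I would invoke tameness: applying $T = \mathsf t(H, \mathsf Z(x^\psi))$ to a min-length (resp.\ max-length) factorization of $x^n$ produces an $x^\psi$-divisible factorization of $x^n$ whose length lies within $T$ of $\ell(x^n)$ (resp.\ $L(x^n)$). Swapping the $x^\psi$-subfactor across $\mathsf L(x^\psi)$, and combining with variations of the tail factor across $\mathsf L(x^{n-\psi})$, produces lengths in $\mathsf L(x^n)$ spaced by $d$ throughout the interior of $[\ell(x^n), L(x^n)]$. The bound $\psi \ge \rho(\Delta(H)) - 1$ is designed precisely so that the $d$-step progression constructed this way covers everything between $\ell(x^n) + T$ and $L(x^n) - T$ without interior gaps larger than $d$; counting the resulting lengths delivers the second auxiliary inequality. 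The principal obstacle is this last, interior-filling step: one must verify carefully that the tame-controlled factorizations at the two extremes, together with the progression structure of $\mathsf L(x^\psi)$, genuinely force the total excess of the gaps in $\mathsf L(x^n)$ over $d$ to be at most $2T$. Finally, letting $n \to \infty$ in the sandwich $1/d - 2T/(nd^2) \le \ld(x^n) \le 1/d$ squeezes $\overline{\ld}(x) = 1/d$.
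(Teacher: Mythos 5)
Your skeleton matches the paper's: the upper bound comes from $\Delta(x^n)\subseteq\Delta(H)$ and Proposition~\ref{basicbounds}, the lower bound from $Q_n:=L(x^n)-\ell(x^n)\ge nd$ together with the claim that the gaps of $\mathsf L(x^n)$ exceed $d$ by a total of at most $2T$, and then one squeezes. Your first auxiliary inequality and your verification that $d\in\Delta(x^\psi)$ are both fine. The genuine gap is exactly where you flag it: the ``interior-filling'' step. What is actually needed is that for every $n\ge\psi$ the set $\mathsf L(x^n)$ is an almost arithmetic progression with difference $d$ and bound $T$, and your sketch --- move a min-/max-length factorization of $x^n$ to within distance $T$ of an $x^\psi$-divisible one, then swap the $x^\psi$-block through $\mathsf L(x^\psi)$ --- does not by itself produce a $d$-spaced progression spanning all of $[\ell(x^n)+T,\,L(x^n)-T]$. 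A single swap only shifts the length by an element of $\mathsf L(x^\psi)-|z_\psi|$, and propagating such shifts across the whole interval, while keeping control of where the tame-degree corrections land, is essentially the content of the structure theorem for sets of lengths; it is not a routine verification, and as written your central claim is asserted rather than proved.

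The paper does not reprove that structure theorem. Instead it checks its hypotheses and cites it: it first shows that $H=\llbracket x\rrbracket$ is itself locally tame (a point you skip entirely, and which is needed both for the finiteness of $T$ and for the theorem to apply inside $H$), shows that $\tau$ exists so that $x^\tau\in\Phi(\{0,d\})$, notes $\psi\ge\rho(\Delta(H))-1$, and then invokes \cite[Theorem 4.3.6]{GHKb} to conclude that the length set of every multiple of $x^\psi$ in $H$ is an AAP with difference $d$ and bound $T=\mathsf t(H,\mathsf Z_S(x^\psi))$. From there the count $(Q_n-2T)/d\le|\mathsf L(x^n)|-1\le Q_n/d$ is exactly your finish. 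So to repair the proposal, either cite the structure theorem after verifying local tameness of $H$ and the membership $x^\tau\in\Phi(\{0,d\})$, or supply the full interior-filling induction yourself; the latter is a substantial argument, not a ``careful verification.''
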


\begin{proof}
Note that \cite[Theorem 1.6.7.2]{GHKb} implies the finiteness of $T$; $|\Delta(H)| <\infty$ yields that $\psi$ is finite. 
We first prove  that $H$ is locally tame.  First note that every unit of $S$ is in $H$; hence $H^\times=S^\times$ and thus $H_{red} \subseteq S_{red}$.  Because $S$ is locally tame, $\mathsf{t}(S,u)$ is finite for all $u\in \mathcal{A}(S_{red})$.
Hence $\sup \{\mathsf{t}(a,u): a\in S, u \in \mathcal{A}(S_{red}) \}$ is finite.
But $H\subseteq S$ and $H_{red} \subseteq S_{red}$. Hence $\sup \{\mathsf{t}(a,u): a\in H, u \in \mathcal{A}(H_{red}) \}$ is finite, being a supremum over a subset.
Hence $\mathsf{t}(H,u)$ is finite for all $u\in \mathcal{A}(H_{red})$.
Hence $H$ is locally tame.

Let $y\in H$ with $d\in \Delta(y)$.  That is, there are factorizations $z', z''$ of $y$ with $|z''|=|z'|+d$.  Since $y\in H$, there is some $m\in \mathbb{N}$ with $y \mid x^m$.  Let $z'''$ be any factorization of $y^{-1}x^m$.  Hence, $\mathsf{L}(x^m)$ contains $|z'''z''|$ and $|z'''z'|$, which are $d$ apart.  It cannot contain a factorization length in between, else $d$ would not be the minimum of $\Delta(H)$.  Hence $\Delta(x^m)$ contains $d$.  This proves that $\tau$ exists.

Because (see \cite[Definition 4.3.1]{GHKb}) $d\in \Delta(x^\tau)$, also $x^\tau\in \Phi(\{0,d\})$ (as is every higher power of $x$).  Recall that $\psi\ge \rho(\Delta(H))-1$.  We apply \cite[Theorem 4.3.6]{GHKb}, to get that every multiple of $x^\psi$ (in $H$) has its length set an AAP with difference $d$ and bound $\mathsf{t}(H,\mathsf{Z}_S(x^\psi))=T$.

Now take any $n\ge \psi$.  Apart from two intervals each of size at most $T$, every gap in $\mathsf{L}(x^n)$ is of size exactly $d$.  For every $n\ge \psi$, set $Q_n=\max \mathsf{L}(x^n) - \min \mathsf{L}(x^n)$.  We have 
\[
\frac{Q_n-2T}{d}\le |\mathsf{L}(x^n)|-1\le \frac{Q_n}{d},
\]
and so 
\[
\frac{1}{d}-\frac{2T}{Q_nd} \le \ld(x^n) \le \frac{1}{d}.
\]
Note that $Q_n\ge nd$, so 
\[
\frac{1}{d}-\frac{2T}{nd^2}\le \frac{1}{d}-\frac{2T}{Q_nd}.
\]
This completes the proof.
\end{proof}

Note the hypothesis that $|\Delta(H)| < \infty$ can be met by $\Delta(S)$ being finite.  This happens if the catenary degree is finite (see \cite[Theorem 1.6.3]{GHKb}) or, as noted above, if $M$ is globally tame.   

\begin{example}
We offer some examples that illustrate Theorem \ref{asymptotic}.
\begin{enumerate}
\item Finitely generated monoids are globally tame (see \cite[Theorem 3.1.4]{GHKb}), hence all nonunit elements admit asymptotic length densities.  
In particular, consider the numerical semigroup $M = \langle 6,9,20\rangle$ and $x=60$.  A brief calculation demonstrates that $\mathsf{L}(nx)=[3n,10n]\setminus\{3n+1,3n+2,3n+3\}$, so $\ld(x^n)=\frac{7n-3}{7n}=1-\frac{3}{7n}$.  Hence the rate of convergence (to the limit of 1), is exactly $O(1/n)$, proving the bound above is tight.
\item Let $H$ be a Krull monoid with class group $G$ and let $G_0 \subset G$ denote the set of classes containing prime divisors. If the Davenport constant $\mathsf D (G_0) < \infty$ (which holds if $G_0$ is finite), then $H$ is globally tame by \cite[Theorem 3.4.10]{GHKb}.  Thus such Krull monoids, such as the ring of algebraic integers in a finite extension of the rationals, satisfy Theorem~\ref{asymptotic}.
Suppose $H$ has infinite cyclic class group $G$, say $G = \mathbb{Z}$ and let  $G_0 \subset G$ denote the set of classes containing prime divisors. If $G_0 \cap \mathbb{N}$ or $G_0 \cap (-\mathbb{N})$ is finite, then $H$ is locally tame by \cite[Theorem 4.2]{Ge-Gr-Sc-Sc10} and has finite catenary degree by \cite[Theorem 1.1]{BCRSY} (we note that $H$ need not be globally tame).

\item Every C-monoid (see \cite[Definition 2.9.5]{GHKb}) is locally tame and has finite catenary degree by \cite[Theorem 3.3.4]{GHKb}.  Note that every Mori domain $D$ with nonzero conductor $\mathfrak f$, finite class group $\mathcal C (\widehat D)$ and finite residue field $\widehat D/\mathfrak f$ is a C-domain by \cite[Theorem 2.11.9]{GHKb}; orders in algebraic number fields are such Mori domains.

\end{enumerate}
\end{example}

\end{document}